\documentclass[12pt]{amsart}
\usepackage{amssymb,amsmath,amstext,graphicx,float}
\usepackage{amsfonts,amsthm,verbatim,subfig}
\usepackage[all]{xy} 
\ifx\pdftexversion\undefined 
\usepackage[a4paper,colorlinks,link
=black,filecolor=black,citecolor=black,urlcolor=black,pdfstartview=FitH]{hyperref}
\else

\usepackage[a4paper,colorlinks,linkcolor=black,filecolor=black,citecolor=black,urlcolor=black,pdfstartview=FitH]{hyperref}
\fi

\font\sixbb=msbm6
\font\eightbb=msbm8
\font\twelvebb=msbm10 scaled 1095
\newfam\bbfam
\textfont\bbfam=\twelvebb \scriptfont\bbfam=\eightbb
                           \scriptscriptfont\bbfam=\sixbb


\newtheorem{theorem}{Theorem}[section]

\newtheorem{lemma}[theorem]{Lemma}
\newtheorem{observation}[theorem]{Observation}




\newcommand{\F}{\mathcal{F}}
\newcommand{\C}{\mathcal{C}}

\newcommand{\R}{\mathbb{R}}

\newcommand{\dist}{\text{dist}}

\begin{document}
\thispagestyle{empty}
\author{Shiliang Gao}
\address{Shiliang Gao: Department of Mathematics,
University of Michigan, Ann Arbor} \email{shiliang@umich.edu}
\author{Shira Zerbib}\address{Shira Zerbib: Department of Mathematics,
University of Michigan, Ann Arbor} \email{zerbib@umich.edu}

\begin{abstract}
A family of sets satisfies the $(p,q)$ property if among every $p$ members of it some $q$ intersect. Given a number $0<r\le 1$, a set $S\subset \R^2$ is called $r$-fat if there exists a point $c\in S$ such that $B(c,r) \subseteq S\subseteq B(c,1)$, where $B(c,r)\subset \R^2$ is a disk of radius $r$ with center-point $c$. We prove constant upper bounds $C=C(r)$ on the piercing numbers in families 
of $r$-fat sets in $\R^2$ that satisfy the $(2,2)$ or the $(4,3)$ properties. This extends results by Danzer \cite{Danzer} and Karasev \cite{karasev} on the piercing numbers in intersecting families of disks in the plane, as well as a result by Kyn\v{c}l and Tancer \cite{KT} on the piercing numbers in families of units disks in the plane satisfying the $(4,3)$ property.     
\end{abstract}

\title{The $(2,2)$ and $(4,3)$ properties in families of fat sets in the plane}
\maketitle

\section{introduction}

\subsection{The $(p,q)$ problem.}
A classical theorem of Helly \cite{helly} asserts that if $\F$ is a family of convex sets in $\R^d$ in which every $d+1$ members intersect, then all the members in $\F$ intersect.    
Helly's theorem initiated the broad area of research in discrete geometry, dealing with questions regarding the number of points needed to pierce families of convex sets in $\R^d$ satisfying certain intersection properties.   

Given integers $p\ge q >1$, a family $\F$ of sets is said to satisfy the {\em $(p,q)$ property} if among any $p$ elements of $\F$ there exist $q$ elements with a non-empty intersection.  
We denote by $\tau(\F)$ the {\em piercing number} (also called in the literature {\em covering number}, {\em stubbing number}, or {\em hitting number}) of $\F$, namely the minimum size of a set of points in $\R^d$ intersecting every set in $\F$. 
The {\em matching number} of $\F$, namely the maximum number of pairwise disjoint sets in $\F$, is denoted by $\nu(F)$. Clearly, $\nu(\F) \le \tau(\F)$. 

If $\nu(\F)=1$ then we say that $\F$ is an {\em intersecting} family of sets. Note that $\F$ satisfies the $(p,2)$ property if and only if $\nu(\F)\le p-1$, and in particular, $\F$ is intersecting if and only if it satisfies the $(2,2)$ property.     

Helly's theorem is that if a family $\F$ of convex sets in $\R^d$ satisfies the $(d+1,d+1)$ property, then $\tau(\F)=1$. Finding the piercing numbers of families of sets in $\R^d$ satisfying the $(p,q)$ property has been referred in the literature as the {\em $(p,q)$ problem}. 

In 1992 Alon and Kleitman \cite{AK} resolved a long standing conjecture of Hadwiger and Debrunner \cite{HD}, proving that for every $p\ge q\ge d+1$ there exists a constant $c=c(d;p,q)$ such that if a family $\F$ of convex sets in $\R^d$ satisfies $(p,q)$ property then $\tau(\F)\le c$.  

Alon and Kleitman's proof is constructive, however, the upper bounds on $c(d;p,q)$ given by their proof are far from being optimal. For example, the Alon-Kleitman proof gives $c(2;4,3)\le 253$; however, in \cite{432} Kleitman, Gy\'arf\'as and T\'oth proved that at most $13$ points are needed to pierce a family of convex sets in $\R^2$ satisfying the $(4,3)$ property. Over the last few decades extensive research has been done to improve the Alon-Kleitman bounds, see e.g., \cite{KST, 432, KT} and the references therein. For an excellent survey on the $(p,q)$ problem we refer the reader to \cite{Eckhoff}.

\subsection{The $(2,2)$ and $(4,3)$ properties in $\R^2$}
There does not exist a general constant upper bound on the piercing number $\tau(\F)$ when $\F$ is an intersecting family of convex sets in $\R^2$, as is exemplified by a family of lines in the plane in general position, in which the piercing number is at least half the number of lines. However, in some cases, when $\F$ consists of   
certain ``nice" sets, constant upper bounds on the piercing numbers in intersecting families can be proved. One such example is the following result by Danzer \cite{Danzer} on intersecting families of disks.
\begin{theorem}[Danzer, \cite{Danzer}]\label{danzer22}
An intersecting family of disks in $\R^2$ has $\tau(\F)\le 4$.
\end{theorem}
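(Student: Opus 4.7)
The natural strategy is to reduce the problem to an analysis centered on a distinguished disk. Let $D_0 = B(c_0, r_0) \in \F$ be of smallest radius; after translating and rescaling I may assume $c_0 = 0$ and $r_0 = 1$. Since $\F$ is intersecting, every other $D = B(c, r) \in \F$ meets $D_0$, forcing $|c| \le r + 1$, and by minimality $r \ge 1$. It therefore suffices to exhibit four points $p_0, p_1, p_2, p_3$, depending only on $D_0$, such that every such $D$ (and $D_0$ itself) contains at least one of them.

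\textbf{Candidate configuration.} I would try $p_k = (\alpha \cos(k\pi/2), \alpha \sin(k\pi/2))$ for $k = 0,1,2,3$, with a parameter $\alpha \in (0,1]$ chosen so that each $p_k$ lies in $D_0$ (handling $D_0$ automatically). By the four-fold symmetry of this configuration, for a given $D = B(c, r)$ with $c \ne 0$ I may assume the angle $\theta := \arg(c)$ lies in $[0, \pi/4]$, in which case the nearest candidate is $p_0 = (\alpha, 0)$. The condition $p_0 \in D$, i.e.\ $|c|^2 - 2\alpha|c|\cos\theta + \alpha^2 \le r^2$, together with the constraints $|c| \le r + 1$, $r \ge 1$ and $\cos\theta \ge 1/\sqrt{2}$, reduces to a one-parameter inequality in $r$; checking it at the extremal configurations $\theta = 0$, $|c| = r+1$, $r = 1$ and $r \to \infty$ fixes the admissible range for $\alpha$.

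\textbf{Main obstacle.} The hard case is $\theta \approx \pi/4$ with $|c| \approx r + 1$: disks nearly externally tangent to $D_0$ along a diagonal direction, so that $D \cap D_0$ is a thin sliver sitting exactly between $p_0$ and $p_1$. A purely local computation with the four candidates above cannot cover this configuration, and one must invoke the full intersection hypothesis of $\F$: two disks nearly tangent to $D_0$ at sufficiently well-separated directions have centers whose distance exceeds the sum of their radii, contradicting pairwise intersection. Quantifying this forbidden angular range — showing that the ``bad'' near-tangent disks are confined to a single $\pi/2$-arc around $c_0$, and hence can be pierced by a suitable enlargement/translation of one of the $p_i$ — is the technical crux. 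I expect this angular-confinement step, which balances the pairwise-intersection constraint against the choice of $\alpha$, to be the principal difficulty; the remainder of the argument should reduce to elementary circle-circle distance estimates to certify that the resulting four points pierce every member of $\F$.
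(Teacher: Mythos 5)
This theorem is not proved in the paper at all: it is quoted as a black box from Danzer's paper \cite{Danzer}, whose actual argument is long and delicate, so there is no in-paper proof to compare against. Judged on its own terms, your outline is a plan whose decisive step is not only unproved but false. You propose to rescue the four candidate points by an ``angular confinement'' lemma: that two disks nearly externally tangent to the smallest disk $D_0=B(0,1)$ at well-separated directions must be disjoint, so that the bad disks live in a single quarter-arc. Compute: take $D_i=B(c_i,r)$ with $c_i=\rho\,u_{\theta_i}$, $\rho=r+1-\e$, $u_\theta=(\cos\theta,\sin\theta)$. Then $|c_1-c_2|^2=2\rho^2(1-\cos(\theta_1-\theta_2))$, while disjointness requires this to exceed $4r^2$, i.e.\ $1-\cos(\theta_1-\theta_2)>2r^2/(r+1-\e)^2\to 2$ as $r\to\infty$. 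So for any fixed angular separation strictly less than $\pi$ and $r$ large, the two disks \emph{do} intersect (far from $D_0$). Concretely, for $r>7$ the three disks at directions $0,\,2\pi/3,\,4\pi/3$ with $\rho=r+1-\e$ pairwise intersect and each meets $D_0$, so $\{D_0,D_1,D_2,D_3\}$ is an intersecting family; yet a point $p$ with $|p|\le 1$ lies in $D_i$ only if $\langle p,u_{\theta_i}\rangle\ge 1-\e-o(1)$, which forces $p$ to sit essentially at the boundary point $u_{\theta_i}$ of $D_0$. Hence no point of your configuration $p_k=\alpha u_{k\pi/2}$ pierces the disks at directions $2\pi/3$ or $4\pi/3$, and the bad near-tangent disks occupy three directions spanning far more than any $\pi/2$-arc.

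The upshot is that the whole framing --- four points inside (or determined solely by) the smallest disk --- cannot be completed. In the limit the near-tangent disks become half-planes $\{x:\langle x,u_\theta\rangle\ge 1-\e\}$, any two of which with non-antipodal normals intersect, so the pairwise-intersection hypothesis gives you no angular restriction in exactly the regime where your candidates fail; piercing such a family forces some of the four points to lie far from $D_0$, in directions that depend on the whole family and not on $D_0$ alone. Any correct proof (Danzer's included, and likewise the arguments this paper actually develops for fat sets) anchors the construction on global data such as a pair of members at maximal distance, rather than on the smallest disk. As written, your proposal has a genuine gap at its self-declared crux, and the suggested way of closing it is refuted by the computation above.
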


An extension of this result for families of homothets in the plane was proved by Karasev \cite{karasev}. Given a centrally symmetric body $B \subset \R^2$ and a number $0< r \le 1$, a {\em $(B,r)$-homothet} is a set $tB+u$ for some number $r \le t \le 1$ and a point $u\in \R^2$. Karasev proved the following: 

\begin{theorem}[Karasev, \cite{karasev}]\label{karasev22}
Let $B$ be a centrally symmetric body in $\R^2$. If $\F$ is an intersecting family of $(B,\frac{1}{2})$-homothets then $\tau(\F)\le 3$.
\end{theorem}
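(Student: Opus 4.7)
The plan is to adapt the strategy behind Danzer's proof of Theorem~\ref{danzer22}, using the central symmetry of $B$ to sharpen the piercing bound from $4$ to $3$. First I would extract an extremal element from $\F$: let $S^* = t^* B + u^*$ be the homothet of minimum scaling factor $t^* \in [\tfrac{1}{2}, 1]$, and translate so that $u^* = 0$. Central symmetry of $B$ gives a clean characterization of intersection: two $(B,\tfrac{1}{2})$-homothets $t_iB+u_i$ and $t_jB+u_j$ meet iff $u_i - u_j \in (t_i+t_j)B$ (using $c_1B + c_2B = (c_1+c_2)B$ for convex $B$). Applied to $S^*$ and any $S = tB + u \in \F$, this yields $u \in (t+t^*)B$, hence $u \in (1+t^*)B \subseteq 3t^*B$ using $t^* \ge \tfrac{1}{2}$. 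The $(2,2)$ property further gives the pairwise bound $\|u_i - u_j\|_B \le t_i + t_j$ for any two $S_i, S_j \in \F$.

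Next, I would construct three piercing points $p_1, p_2, p_3$ anchored to $S^*$. The plan is to partition the plane into three sectors $\Sigma_1, \Sigma_2, \Sigma_3$ with common vertex $0$, assign each $S = tB+u \in \F$ to the sector $\Sigma_k$ containing $u$, and choose $p_k$ in or on the boundary of $S^*$ so that $p_k$ pierces every $S$ assigned to $\Sigma_k$. For a Euclidean disk or regular hexagon $B$, the natural choice is $\Sigma_k$ of angular width $\tfrac{2\pi}{3}$ with $p_k$ on $\partial S^*$ in the bisecting direction of $\Sigma_k$; for a general centrally symmetric $B$, the sectors and anchor points must be chosen adaptively, potentially using a continuity or Borsuk--Ulam-type existence argument to produce the configuration in a $B$-invariant manner.

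The verification step reduces to the inequality $\|u - p_k\|_B \le t$ for each $S = tB+u \in \F$ with $u \in \Sigma_k$. I expect this to follow from combining: (i) the containment $u \in (t+t^*)B$, (ii) the sector restriction $u \in \Sigma_k$, (iii) the construction of $p_k$ on (or near) $\partial S^*$, (iv) the scale lower bound $t \ge t^* \ge \tfrac{1}{2}$, and (v) the pairwise bound $\|u_i - u_j\|_B \le t_i + t_j$ from the $(2,2)$ property, needed for borderline $u$ close to sector boundaries.

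The main obstacle is the piercing verification: exhibiting anchor points $p_k$ that work \emph{simultaneously} for every compatible $(t,u)$ in their sector. The hypothesis $r = \tfrac{1}{2}$ is tight here, since $\|u - p_k\|_B \le t$ becomes an equality for extremal choices of $(t,u)$ and would fail for $r < \tfrac{1}{2}$. Moreover, the absence of a canonical $3$-fold symmetry of $\partial B$ for a general centrally symmetric body $B$ means the sector-and-anchor construction cannot be made uniform in $B$, forcing either a case analysis tailored to the shape of $B$ or an averaging/topological existence argument. This is where I expect the bulk of the technical work to lie.
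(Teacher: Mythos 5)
This statement is quoted from Karasev's paper \cite{karasev}; the present paper gives no proof of it (it only uses it, via Observation~\ref{observation}, to derive Theorem~\ref{karasev}). So there is no in-paper argument to compare against, and your proposal must stand on its own.

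As it stands, it does not. The first part of your argument is correct but standard: for a centrally symmetric convex $B$, the homothets $t_iB+u_i$ and $t_jB+u_j$ meet iff $u_i-u_j\in(t_i+t_j)B$, and taking $S^*$ of minimal scale $t^*\ge\frac12$ centered at the origin gives $u\in(t+t^*)B\subseteq 3t^*B$ for every member $tB+u$. This only reduces the theorem to a covering/piercing problem; the entire content of Karasev's result lies in the step you defer, namely producing three points that simultaneously pierce everything. Your proposed construction --- three angular sectors of width $\frac{2\pi}{3}$ with anchor points $p_k$ on $\partial S^*$ in the bisecting directions --- already fails for the Euclidean disk: take $B$ the unit disk, $t^*=\frac12$, and a member $tB+u$ with $u$ on a sector boundary at distance $t+\frac12$ from the origin, hence at angle $\frac{\pi}{3}$ from the anchor direction. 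Then $\|u-p_k\|^2=(t+\tfrac12)^2+\tfrac14-\tfrac12(t+\tfrac12)=t^2+\tfrac{t}{2}+\tfrac14>t^2$, so $p_k$ misses that set. Ruling such configurations out requires exploiting the pairwise intersections among \emph{all} members of $\F$ (not just with $S^*$), which you flag as item (v) ``for borderline $u$'' but never actually use; for equal scales and $B$ a disk this is essentially the planar Borsuk partition of a diameter-$1$ set into three pieces of circumradius $\frac12$, and for general centrally symmetric $B$ with varying scales it is the heart of Karasev's argument. Your suggestion that a ``continuity or Borsuk--Ulam-type existence argument'' will supply the adaptive sectors and anchors is a placeholder, not a proof.
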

  
As mentioned above, for families satisfying the $(4,3)$ property we have the following.
\begin{theorem}[Kleitman--Gy\'arf\'as--T\'oth, \cite{432}]\label{KGT}
If $\F$ is a family of convex sets in the plane that satisfies the $(4,3)$ property, then $\tau(\F)\le 13$.
\end{theorem}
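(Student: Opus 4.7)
The starting observation is that the $(4,3)$ property forces $\nu(\F)\le 3$, since any four pairwise disjoint sets would immediately violate $(4,3)$ (no two, hence no three, of them intersect). My plan is to exploit this bound on the matching number together with the finer structure imposed by $(4,3)$, reducing the problem to piercing a few ``nicely situated'' subfamilies.

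First I would fix a maximum matching $A_1,\dots,A_k$ with $k\le 3$; the extremal case is $k=3$, so I focus on it. Maximality guarantees that every other $B\in\F$ meets at least one $A_i$ (else $\{A_1,A_2,A_3,B\}$ would extend the matching). Assigning each $B$ to some $A_i$ it meets partitions $\F$ into three subfamilies $\F_1,\F_2,\F_3$, where each member of $\F_i$ meets $A_i$ and $A_i\in\F_i$. It then suffices to pierce each $\F_i$ by a bounded number of points, since the union of the three piercing sets pierces $\F$.

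For the core step---piercing a family $\F_i$ of planar convex sets, all meeting a fixed convex $A_i$ and inheriting the $(4,3)$ property from $\F$---I would invoke the Alon--Kleitman framework. The $(4,3)$ property together with Helly's theorem yields the fractional Helly property (a positive fraction of triples in $\F_i$ share a common point), which via LP duality produces a fractional transversal of bounded weight; a weak $\epsilon$-net for planar convex sets, whose size is $O(1/\epsilon)$, then converts this fractional transversal into an integral piercing set of bounded size. Summing the contributions across $\F_1,\F_2,\F_3$ produces a uniform bound on $\tau(\F)$.

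The hard part is sharpening the constant down to exactly $13$. The generic Alon--Kleitman argument yields the much weaker bound of $253$ already cited in the introduction, and bringing this down to $13$ requires a careful, planar-specific combinatorial analysis: sharper fractional-Helly ratios for triples of convex sets in $\R^2$, an optimized weak $\epsilon$-net constant in the plane, and a refined accounting when merging the piercing points for $\F_1,\F_2,\F_3$ (in particular, reusing the points that already lie in $A_i$ to avoid double-counting). I expect this constant optimization, rather than the structural reduction, to be the main technical obstacle.
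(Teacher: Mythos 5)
This theorem is not proved in the paper at all: it is quoted from Kleitman--Gy\'arf\'as--T\'oth \cite{432}, so there is no in-paper argument to compare against. Judged on its own terms, your proposal has a genuine gap. The opening reduction is fine --- the $(4,3)$ property does force $\nu(\F)\le 3$, and splitting $\F$ into at most three subfamilies $\F_i$, each consisting of sets meeting a fixed member $A_i$ of a maximum matching, is a legitimate first step (it is essentially the same device the paper uses in Observation \ref{observation} for the easier $\nu\le 2$ situation). But the core of the claim is the constant $13$, and your argument does not produce it. Applying the Alon--Kleitman fractional-Helly-plus-weak-$\e$-net machinery to each $\F_i$ gives a constant on the order of the $253$ already mentioned in the introduction, \emph{per subfamily}, so summing over three subfamilies makes the bound worse, not better. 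Nothing in the decomposition exploits the extra structure (that every member of $\F_i$ meets the convex set $A_i$) in a quantitative way, and you explicitly defer the entire reduction from hundreds down to $13$ as ``the main technical obstacle.'' That deferred step is the theorem; without it the proposal proves only that $\tau(\F)$ is bounded by some absolute constant, which is the Alon--Kleitman result, not the Kleitman--Gy\'arf\'as--T\'oth one.

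For reference, the actual proof in \cite{432} is not an application of the fractional Helly / $\e$-net framework. It is an elementary but delicate planar case analysis organized around the matching number and around subfamilies satisfying the $(3,3)$ property (which Helly's theorem pierces with a single point), together with carefully chosen piercing points on the boundaries and in the intersections of the matched sets. If you want to pursue your outline, the missing ingredient is a strong quantitative lemma for families of planar convex sets all meeting a fixed convex set and satisfying $(4,3)$; the LP-duality route will not supply one with a single-digit constant.
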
   
  
It seems that improving the bound in Theorem \ref{KGT} in general is a hard task.
However, bounds on the piercing numbers in families satisfying the $(4,3)$ property can be significantly improved if one considers only certain restricted such families. For example, Kyn\^cl and Tancer proved in \cite{KT} a tight upper bound  for families of unit disks satisfying the $(4,3)$ property:  
\begin{theorem}[Kyn\^cl--Tancer, \cite{KT}]\label{kt} 
If $\F$ is a family of unit disks in the plane which satisfies the $(4,3)$ property then $\tau(\F)\le 3$, and there exist  families of sets achieving this bound.
\end{theorem}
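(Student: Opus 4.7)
My plan is to stratify by the matching number $\nu(\F)$ and reduce as much as possible to Karasev's Theorem~\ref{karasev22}. The first step is the observation that $\nu(\F) \le 2$ whenever $|\F| \ge 4$: four pairwise disjoint disks already violate $(4,3)$, and if $D_1, D_2, D_3 \in \F$ were pairwise disjoint then for any further $D \in \F$ the $(4,3)$ property applied to $\{D_1, D_2, D_3, D\}$ would force some triple to share a point, yet every such triple either is $\{D_1, D_2, D_3\}$ itself or contains two pairwise-disjoint members of it. The case $|\F| \le 3$ is immediate from $\tau(\F) \le |\F|$, and it already supplies the tight example: three pairwise disjoint unit disks satisfy $(4,3)$ vacuously and require exactly three piercing points.

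If $\nu(\F) = 1$, then $\F$ is intersecting, and since any unit disk is a $(B,\tfrac{1}{2})$-homothet of the unit disk $B$ itself (taking scale $t = 1$), Theorem~\ref{karasev22} gives $\tau(\F) \le 3$ at once. The substantive case is therefore $\nu(\F) = 2$. Fix a maximum matching $\{D_1, D_2\}$. For any two further disks $D, D' \in \F \setminus \{D_1, D_2\}$, applying $(4,3)$ to $\{D_1, D_2, D, D'\}$ produces a triple with a common point; since $D_1 \cap D_2 = \emptyset$, this triple must be $\{D_i, D, D'\}$ for some $i \in \{1, 2\}$. In particular $D \cap D' \ne \emptyset$, so $\F \setminus \{D_1, D_2\}$ is a pairwise intersecting family of unit disks, and maximality of the matching forces every such $D$ to meet at least one of $D_1, D_2$.

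The main obstacle is to assemble only three piercing points covering all of $\F$, including $D_1$ and $D_2$ themselves. A direct application of Theorem~\ref{karasev22} to $\F \setminus \{D_1, D_2\}$ already uses three points and leaves $D_1, D_2$ unpierced. My plan is to partition $\F \setminus \{D_1, D_2\} = \F_1 \cup \F_2$ by which of $D_1, D_2$ each disk meets, and then to exploit the enriched $(4,3)$ interaction between $\F_1$ and $\F_2$ together with unit-disk specific geometry — in particular the classical relation between pairwise center distances and circumradii (as in Jung's theorem) that governs when three unit disks share a point — to select for each $i \in \{1,2\}$ a single point inside $D_i$ piercing the bulk of $\F_i \cup \{D_i\}$, with a third point absorbing a residue of disks lying in the overlap $\F_1 \cap \F_2$ or incompatible with the chosen points. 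The technical heart, which I expect to be the hardest part, will be a metric case analysis on the distance $|c_1 - c_2|$ between the centers of $D_1$ and $D_2$, verifying in every range that this consolidation down to three points is possible.
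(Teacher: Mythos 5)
The paper does not prove Theorem~\ref{kt} at all: it is quoted as a known result of Kyn\v{c}l and Tancer from \cite{KT}, so there is no in-paper proof to match your argument against. Judged on its own, your write-up correctly disposes of the easy strata. The bound $\nu(\F)\le 2$ for $|\F|\ge 4$ is right, the case $|\F|\le 3$ is trivial, three pairwise disjoint unit disks do (vacuously, since the family has fewer than four members) satisfy the $(4,3)$ property and witness tightness of the bound, and in the case $\nu(\F)=1$ the appeal to Theorem~\ref{karasev22} is legitimate because unit disks are $(B,1)$-homothets of a fixed disk $B$.

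The genuine gap is the case $\nu(\F)=2$, which is the entire content of the theorem, and which you explicitly leave as a plan rather than a proof (``the technical heart, which I expect to be the hardest part''). Two concrete problems. First, your proposed partition $\F\setminus\{D_1,D_2\}=\F_1\cup\F_2$ ``by which of $D_1,D_2$ each disk meets'' misses a structural fact you would need: if some $D$ meets only $D_1$ and some $D'$ meets only $D_2$, then no three of $D_1,D_2,D,D'$ can share a point, so in fact (as in Observation~\ref{observation}) one of the two one-sided classes is empty; the correct decomposition is $\F=\{D_1\}\cup\F_{12}\cup\F_2$ with $\F_2$ satisfying $(3,3)$ (hence pierced by one point via Helly) and $\F_{12}\cup\{D_1\}$ intersecting (hence pierced by three points via Karasev). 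That route, however, only yields $\tau(\F)\le 4$ --- exactly Theorem~\ref{karasev} of the paper --- and improving $4$ to $3$ is precisely where the unit-disk-specific geometry must enter. Second, the consolidation you describe (one point inside each $D_i$ absorbing most of its class, a third point for the residue, controlled by a case analysis on $\dist(c_1,c_2)$ and Jung-type criteria for when three unit disks meet) is a plausible strategy but none of it is carried out: no piercing points are exhibited, no distance ranges are analyzed, and no argument is given that the ``residue'' can always be absorbed by a single third point. As written, the proposal proves $\tau(\F)\le 4$ in the main case, not $\tau(\F)\le 3$.
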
 
Other types of families $\F$ satisfying the $(4,3)$ property that were proved in \cite{KT} to achieve $\tau(\F)\le 3$ are families of translations of a triangle in $\R^2$ and families of line segments in $\R^d$.

Furthermore, Theorems \ref{danzer22} and \ref{karasev22} imply the following bounds for families of disks or homothetes: 
\begin{theorem}\label{danzer}
If $\F$ is a family of disks in $\R^2$ satisfying the $(4,3)$ property then $\tau(\F)\le 5$.
\end{theorem}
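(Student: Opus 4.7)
The plan is to combine Theorem~\ref{danzer22} (Danzer's bound of $4$ for pairwise intersecting disks) with two structural consequences of the $(4,3)$ property. The first step is to show that the $(4,3)$ property forces $\nu(\F)\le 2$: assuming $|\F|\ge 4$ (else the bound is trivial), if three pairwise disjoint disks $D_1,D_2,D_3\in\F$ existed, one could pick any fourth $E\in\F$ and observe that every triple of $\{D_1,D_2,D_3,E\}$ contains two disjoint members, hence no triple has a common point, contradicting $(4,3)$.

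The case $\nu(\F)\le 1$ is dispatched directly by Theorem~\ref{danzer22}. In the substantive case $\nu(\F)=2$, I would fix a pair of disjoint disks $D_1,D_2\in\F$ and apply $(4,3)$ to quadruples of the form $\{D_1,D_2,E,E'\}$ with $E,E'\in\F\setminus\{D_1,D_2\}$. In any such quadruple the two triples containing both $D_1$ and $D_2$ automatically have empty intersection, so the witness triple guaranteed by the $(4,3)$ property must be $\{D_1,E,E'\}$ or $\{D_2,E,E'\}$, both of which force $E\cap E'\ne\emptyset$. Thus $\F\setminus\{D_1,D_2\}$ is pairwise intersecting.

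The asymmetric refinement that saves one point over the naive bound of $4+2=6$ proceeds as follows. Since $\{D_1,D_2\}$ is a maximum matching, each $E\in\F\setminus\{D_1,D_2\}$ meets $D_1$ or $D_2$; but if some $E$ met only $D_1$ and some $E'$ met only $D_2$, then every triple of $\{D_1,D_2,E,E'\}$ would again contain a disjoint pair, contradicting $(4,3)$. Hence, after possibly swapping $D_1$ and $D_2$, every disk in $\F\setminus\{D_1,D_2\}$ meets $D_1$, and therefore $\{D_1\}\cup(\F\setminus\{D_1,D_2\})$ is a pairwise intersecting family of disks. Theorem~\ref{danzer22} pierces this family with $4$ points, and one additional point on $D_2$ completes a piercing of $\F$ with $5$ points. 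I do not foresee a substantive obstacle; the whole argument is a careful case analysis of which triples of $\{D_1,D_2,E,E'\}$ can serve as the witness for the $(4,3)$ property, and the main content is precisely the asymmetric refinement in the last step.
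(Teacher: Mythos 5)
Your proof is correct and follows essentially the same route as the paper, which obtains Theorem \ref{danzer} from Theorem \ref{danzer22} via Observation \ref{observation}: the reduction to $\nu(\F)\le 2$, the quadruple analysis showing $\F\setminus\{D_1,D_2\}$ is pairwise intersecting, and the asymmetric step forcing (after relabeling) every remaining disk to meet $D_1$ are exactly the paper's argument. The only difference is the final accounting: you apply Danzer's bound to the intersecting family $\F\setminus\{D_2\}$ and spend the extra point on $D_2$ itself, whereas the paper applies the bound of $4$ to $\F_{AB}\cup\{A\}$ and uses Helly's theorem (via the $(3,3)$ property) to pierce the disks missing $A$ with one point --- a harmless variation that yields the same $4+1=5$.
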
 
\begin{theorem}\label{karasev} 
If $\F$ is a family of $(B,\frac{1}{2})$-homothets of a centrally symmetric body $B$ in $\R^2$ and $\F$ satisfies the $(4,3)$ property, then $\tau(\F)\le 4$. 
\end{theorem}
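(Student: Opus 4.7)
My plan is to reduce Theorem \ref{karasev} to Karasev's intersecting result (Theorem \ref{karasev22}) via a case analysis on the matching number $\nu(\F)$. Since the $(4,3)$ property implies the $(4,2)$ property, we immediately get $\nu(\F) \le 3$. The case $\nu(\F)=1$ is handled directly by Theorem \ref{karasev22}, which yields $\tau(\F) \le 3 \le 4$.

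For the case $\nu(\F)=3$, fix a maximum matching $F_1,F_2,F_3 \in \F$. The first key observation is that the $(4,3)$ property forces every other set $G \in \F$ to intersect at least two of the $F_i$: applying $(4,3)$ to $\{G,F_1,F_2,F_3\}$ produces a triple with a common point, and since the $F_i$ are pairwise disjoint, that triple must contain $G$ together with two of the $F_i$. This lets me decompose $\F \setminus \{F_1,F_2,F_3\} = \F_{12} \cup \F_{13} \cup \F_{23}$, where $\F_{ij}$ consists of sets meeting both $F_i$ and $F_j$. The next key step is to show that each $\F_{ij}$ is itself an intersecting family: for $A,B \in \F_{12}$, apply $(4,3)$ to $\{A,B,F_2,F_3\}$; the resulting intersecting triple cannot use both $F_2$ and $F_3$, so it must contain both $A$ and $B$, giving $A\cap B\neq \emptyset$. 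Theorem \ref{karasev22} then yields $\tau(\F_{ij}) \le 3$ for each pair.

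For the case $\nu(\F)=2$, with a maximum matching $F_1,F_2$, every other set must meet $F_1$ or $F_2$ by maximality. Here the $(4,3)$ argument above does not directly give pairwise intersection within each cluster, but one can still decompose $\F$ into the subfamily of sets meeting both $F_i$'s (which one shows is intersecting by a $(4,3)$ argument involving a fourth auxiliary set) and the sets meeting only one $F_i$ (for which avoidance of the other $F_i$ together with $\nu(\F)=2$ forces pairwise intersection inside the cluster). Theorem \ref{karasev22} again applies to each intersecting piece.

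The main obstacle is going from the coarse bound that this naive application of Theorem \ref{karasev22} gives to the sharp bound $\tau(\F) \le 4$: simply summing three piercings of three points each produces far more than $4$ points. To close this gap I expect one has to exploit the geometry of centrally symmetric $(B,\tfrac12)$-homothets meeting two fixed such sets $F_i, F_j$ very carefully. Because every $G \in \F_{ij}$ must contain a translate $c_G+\tfrac12 B$ and must meet both $F_i$ and $F_j$, the center $c_G$ is constrained to a bounded region determined by $F_i$ and $F_j$, and the sets in $\F_{ij}$ should admit a single piercing point (or share piercing points across the three subfamilies). Identifying such a universal piercing point for each $\F_{ij}$, and showing that together with one point per $F_i$ these combine into at most $4$ points total, is the subtle geometric step where central symmetry of $B$ and the lower bound $t \ge \tfrac12$ on the homothety ratio should be used in the spirit of Karasev's proof of Theorem \ref{karasev22}.
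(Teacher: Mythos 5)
Your overall strategy---reduce to Karasev's intersecting-family bound by analyzing the matching number---is the right one, but the proposal stops short of a proof: you explicitly leave the passage from ``several intersecting subfamilies, each costing $3$ points'' to the bound $4$ as an unproven geometric ``expectation.'' That gap is real, and the paper closes it with two purely combinatorial observations you are missing. First, for $|\F|\ge 4$ the $(4,3)$ property forces $\nu(\F)\le 2$, not merely $\nu(\F)\le 3$: if $F_1,F_2,F_3$ are pairwise disjoint and $G$ is any fourth set, then every triple from $\{F_1,F_2,F_3,G\}$ contains two disjoint sets and so has empty intersection. Hence your entire $\nu(\F)=3$ case, with the decomposition into $\F_{12}\cup\F_{13}\cup\F_{23}$, is vacuous (and when $|\F|\le 3$ one trivially has $\tau(\F)\le 3$).

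Second, in the case $\nu(\F)=2$ with disjoint $F_1,F_2$, the key structural facts are: (i) one of your two ``one-sided'' clusters is empty, since if $D$ misses $F_1$ and $E$ misses $F_2$ then $\{F_1,F_2,D,E\}$ violates the $(4,3)$ property; so WLOG every other set meets $F_2$. (ii) The cluster of sets meeting $F_2$ but not $F_1$ satisfies the $(3,3)$ property (a triple in it with empty intersection, together with $F_1$, violates $(4,3)$), and therefore---by Helly's theorem, using convexity of the homothets---is pierced by a \emph{single} point. This is exactly the ``universal piercing point'' you were hoping to extract from the geometry of $(B,\tfrac12)$-homothets; it comes for free from Helly and needs no central symmetry or homothety-ratio argument. (iii) The sets meeting both $F_1$ and $F_2$, together with $F_1$, form an intersecting family (two disjoint such sets together with $F_1,F_2$ would violate $(4,3)$), so Theorem \ref{karasev22} pierces them with $3$ points. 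The total is $3+1=4$. This is precisely the paper's Observation \ref{observation} specialized to $t=3$; your approach as written cannot reach $4$ because you pay $3$ points for each of several clusters instead of recognizing that all but one cluster collapses to a single Helly point (or is empty).
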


Both theorems follow from a simple observation:
\begin{observation}\label{observation}
Let $\C$ be a collection of convex sets in $\R^2$ and let $t\ge3$. If for every family $\F\subset \C$ that satisfies the $(2,2)$ property we have $\tau(\F)\le t$, then for every family $\F\subset \C$ that satisfy the $(4,3)$ property we have $\tau(\F)\le t+1$.
\end{observation}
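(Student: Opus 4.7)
The plan is to partition $\F$ into a sub-family controlled by the hypothesis and a sub-family controlled by Helly's theorem in $\R^2$. If $\F$ itself satisfies the $(2,2)$ property, then $\tau(\F)\le t\le t+1$ and there is nothing to do. The cases $|\F|\le 3$ are handled trivially by piercing each set with its own point (and $3\le t+1$ since $t\ge 3$), so we may assume $|\F|\ge 4$ and fix two disjoint members $A,B\in\F$.

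Next I would split $\F$ as $\F_A\sqcup \F'$, where
\[
\F_A=\{C\in\F:A\cap C\ne\emptyset\}\quad\text{and}\quad \F'=\F\setminus\F_A,
\]
so $A\in\F_A$, $B\in\F'$, and every $C\in\F'$ is disjoint from $A$. The goal is to prove $\tau(\F_A)\le t$ and $\tau(\F')\le 1$; summing yields $\tau(\F)\le t+1$.

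For the first bound I would verify that $\F_A$ is intersecting. Take two distinct $C,D\in\F_A$; if one of them equals $A$ then $C\cap D\ne\emptyset$ is automatic, so assume both differ from $A$ and apply the $(4,3)$ property to $\{A,B,C,D\}$. Since $A\cap B=\emptyset$, the only triples that can have a common point are $\{A,C,D\}$ and $\{B,C,D\}$, each of which forces $C\cap D\ne\emptyset$. The hypothesis on $(2,2)$-families in $\C$ then gives $\tau(\F_A)\le t$.

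For the second bound I would show that every three members of $\F'$ have a common point and invoke Helly's theorem. Given distinct $C_1,C_2,C_3\in\F'$, applying the $(4,3)$ property to the quadruple $\{A,C_1,C_2,C_3\}$ rules out every triple containing $A$, because $A$ is disjoint from each $C_i$, leaving $C_1\cap C_2\cap C_3\ne\emptyset$. Since the members of $\F'$ are convex in $\R^2$, Helly's theorem yields $\bigcap_{C\in\F'}C\ne\emptyset$, so $\tau(\F')\le 1$. I do not anticipate a serious obstacle: the argument hinges on choosing the quadruples fed into the $(4,3)$ hypothesis so that the unwanted triples are eliminated by the disjointness of $A$ from the remaining candidates, and both quadruples present themselves naturally once $A$ and $B$ are fixed.
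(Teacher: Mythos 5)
Your proposal is correct and takes essentially the same route as the paper's proof: fix two disjoint members $A,B\in\F$, observe that the subfamily of sets meeting $A$ is pairwise intersecting (via the quadruple $\{A,B,C,D\}$) and hence pierced by $t$ points by hypothesis, while the sets disjoint from $A$ satisfy the $(3,3)$ property (via quadruples containing $A$) and are pierced by one point by Helly's theorem. The only cosmetic difference is that you partition $\F$ directly according to intersection with $A$, which lets you skip the paper's intermediate claim that either every other set meets $A$ or every other set meets $B$.
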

\begin{proof}
Let $\F\subset \C$ be a family of sets satisfying the $(4,3)$ property. If $|F| \le 3$ the observation is trivial. Thus we may assume $\F$ contains at least $4$ sets. Observe that $\nu(\F)\le 2$, for otherwise
 a matching of size $3$ together with any other set in $\F$ is a collection of four sets violating the $(4,3)$ property. If $\nu(\F)=1$ then $\F$ satisfies the $(2,2)$ property, and thus $\tau(\F)\le t$. Suppose $\nu(\F)=2$ and  let $A,B$ be two disjoint sets in $\F$. We claim that either every  set in $\F\setminus \{A,B\}$ intersects $A$ or every set in $\F\setminus \{A,B\}$ intersects $B$; indeed, if there exist $D,E \in \F\setminus \{A,B\}$ such that $D\cap A = E\cap B=\emptyset$, then the foursome $A,B,D,E$ violates the $(4,3)$ property. Assume without loss of generality that every set in $\F\setminus \{A,B\}$ intersects $B$. Then $\F= \F_B \cup \F_{AB} \cup \{A\}$, where $\F_B\subset \F$ is the family of sets in $\F$ intersecting $B$ and not intersecting $A$, and $\F_{AB}\subset \F$ is the family of sets in $\F$ intersecting both $A$ and $B$. Observe that $\F_B$ must satisfy the $(3,3)$ property, for otherwise a non-intersecting triple of sets in $\F_B$ together with $A$ constitute a foursome of sets violating the $(4,3)$ property. Hence by Helly's theorem $\tau(\F_B)=1$. Furthermore, $F_{AB}\cup \{A\}$ is an intersecting family of sets,  since if $D,E \in \F_{AB}$ are disjoint then the sets $A,B,D,E$ violate the $(4,3)$ property. Thus we have $\tau(\F) \le  \tau(F_{AB}\cup\{A\})+\tau(\F_B)\le t+1$, proving the observation.         
\end{proof}

\subsection{Our results}
In this note we further investigate the piercing numbers in families of sets in $\R^2$ satisfying the $(2,2)$ or the $(4,3)$ properties.   
To this end we define the notion of {\em fat sets}, and prove upper bounds on the piercing numbers in families of fat sets in the plane satisfying these intersection properties. 

Given a number $0<r\le 1$, a set $S\subset \R^2$ will be called {\em $r$-fat} if there exists a point $c\in S$ such that $B(c,r) \subseteq S\subseteq B(c,1)$, where $B(c,r)$ is the disk in $\R^2$ of radius $r$ with center-point $c$. 
Thus a $1$-fat set is a unit disk, and every disk of radius $d\le 1$ is an $r$-fat set for every $r\le d$. 
Note that for $r<1$, an $r$-fat set is not necessarily convex. 

We extend the results mentioned above by establishing constant upper bounds, depending only on $r$, on the piercing numbers in intersecting families of $r$-fat sets in the plane, or in families of convex $r$-fat sets in the plane that satisfy the $(4,3)$ property. We consider only such families whose union is compact. We prove the following.  
 
\begin{theorem}\label{main1}
Let $\F$ be a family of (not necessarily convex) $r$-fat sets in the plane satisfying the $(2,2)$ property.  
\begin{enumerate} 
\item If $\sqrt{8}-2\le r\le 1$ then $\tau(\F) \le 4$.
\item If $0.68\le r \le 1$ then $\tau(\F) \le 5$.
\item For every $0<r\le 1$ we have $\tau(\F)\le (\lceil \frac{\sqrt{2}}{r} \rceil)^2$, and in particular, if $r\ge 0.5$ then $\tau(\F) \le 9$.
\end{enumerate}
\end{theorem}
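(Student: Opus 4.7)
The plan is to reduce the piercing problem for $\F$ to a covering problem for a bounded subset of $\R^2$. For each $S\in\F$, fix a center $c_S\in S$ with $B(c_S,r)\subseteq S\subseteq B(c_S,1)$; any point $p\in\R^2$ with $|p-c_S|\le r$ then lies in $B(c_S,r)\subseteq S$ and hence pierces $S$. So it suffices to find a small set of points $p_1,\ldots,p_k\in\R^2$ such that $\bigcup_{i=1}^k B(p_i,r)$ covers the center set $X:=\{c_S:S\in\F\}$; the $p_i$'s will then form a piercing set.

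The next step is to bound the geometry of $X$. Because $\F$ satisfies the $(2,2)$ property, any two sets $S,S'\in\F$ share a point $q\in B(c_S,1)\cap B(c_{S'},1)$, so $|c_S-c_{S'}|\le 2$. Hence $X$ has diameter at most $2$, which means on the one hand that $X$ fits into an axis-aligned square $Q$ of side $2$, and on the other hand (by Jung's theorem in the plane) that $X$ fits into a closed disk of radius at most $2/\sqrt{3}$.

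For part (3) I will use the square enclosure: subdivide $Q$ into a $k\times k$ grid of sub-squares with $k=\lceil \sqrt{2}/r\rceil$, so that each sub-square has side $2/k\le r\sqrt{2}$ and diameter $\le 2r$. Placing a piercing point at the center of each sub-square guarantees that every $c_S$ is within distance $r$ of some piercing point, yielding $\tau(\F)\le k^2=(\lceil \sqrt{2}/r\rceil)^2$. The particular bound $\tau(\F)\le 9$ for $r\ge 1/2$ then follows because $\sqrt{2}/r\le 2\sqrt{2}<3$, so $k\le 3$.

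For parts (1) and (2) the plan is to replace the uniform grid by more efficient coverings using $4$ or $5$ congruent disks. Recall that a disk of radius $R$ admits a cover by $4$ congruent disks of radius $R/\sqrt{2}$ arranged at the vertices of an inner square, and a cover by $5$ congruent disks of a smaller radius in the classical ``one-in-the-center, four-around'' pattern. Applying such a cover to the Jung enclosing disk of $X$, and placing the piercing points at the centers of the covering disks, should yield $\tau(\F)\le 4$ once $r$ exceeds a threshold and $\tau(\F)\le 5$ for a wider range of $r$; the explicit constants $\sqrt{8}-2$ and $0.68$ will come out of the precise optimization of these configurations. The hard part will be the geometric bookkeeping: one must choose coordinates for the $4$ or $5$ piercing points relative to a canonical reference (the Jung center of $X$, or the center $c_{S_0}$ of a carefully chosen $S_0\in\F$ whose existence is guaranteed by the compactness of $\bigcup\F$) and then verify over all admissible positions of a center $c_S$ that at least one piercing point lies within distance $r$ of it.
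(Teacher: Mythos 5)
Your reduction (a point within distance $r$ of $c_S$ pierces $S$, so it suffices to cover the center set $X$ by disks of radius $r$; the $(2,2)$ property forces $\mathrm{diam}(X)\le 2$) is exactly the paper's Lemma \ref{lemmageometry} plus its distance bound, and your part (3) is correct and essentially identical to the paper's argument (a $2\times 2$ square cut into $\lceil\sqrt2/r\rceil^2$ subsquares of diagonal $\le 2r$). Your part (1) takes a genuinely different route: Jung's theorem puts $X$ in a disk of radius $2/\sqrt3$, and the quadrant decomposition covers that disk by four disks of radius $(2/\sqrt3)/\sqrt2=\sqrt{2/3}\approx 0.8165$, which is indeed below $\sqrt8-2\approx 0.8284$. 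So once you actually check that inequality (you left it as ``a threshold''), this gives a cleaner, figure-free proof of (1) than the paper's explicit four-disk configurations, and even a marginally better threshold.

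The genuine gap is in part (2). The thinnest covering of a disk by five congruent disks has ratio $\approx 0.6094$ (and your ``one-in-the-center, four-around'' pattern is no better than the optimum), so covering the Jung disk of radius $2/\sqrt3\approx 1.1547$ requires five disks of radius at least $\approx 0.7037$. That exceeds $0.68$, so no choice of five-disk configuration applied to the Jung enclosing disk can reach the stated constant; your method proves $\tau(\F)\le 5$ only for $r\gtrsim 0.704$. To get down to $0.68$ you must use more than the diameter of $X$: the paper fixes a diametral pair $A,B$ (using compactness of $\bigcup\F$) and covers the lens $B(c_A,d)\cap B(c_B,2)$ intersected with a horizontal strip of height $2$ --- a region strictly smaller than the Jung disk --- by five explicit disks of radius $0.68$, using a translation lemma (Lemma \ref{lemma6}) to normalize the unknown vertical position of the strip. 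Some version of this finer localization of $X$ is unavoidable for the constant $0.68$, so as written your plan for (2) cannot be completed.
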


\begin{theorem}\label{main2}
Let $\F$ be a family of convex $r$-fat sets in the plane satisfying the $(4,3)$ property.  
\begin{enumerate} 
\item If $\sqrt{8}-2\le r\le 1$ then $\tau(\F) \le 4$.
\item If $0.68\le r\le 1$ then $\tau(\F) \le 5$.
\item For every $0<r\le 1$ we have $\tau(\F)\le (\lceil \frac{\sqrt{2}}{r} \rceil)^2 + 1$, and in particular, if $r\ge 0.5$ then $\tau(\F) \le 10$.
\end{enumerate}
\end{theorem}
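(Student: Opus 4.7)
The plan is to combine Observation \ref{observation} with Theorem \ref{main1}, refining the case analysis of the observation to save one point in parts (1) and (2).

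Part (3) follows immediately. Since $\lceil \sqrt{2}/r \rceil^2 \ge 4 \ge 3$ whenever $r \le 1$, Observation \ref{observation} applied to Theorem \ref{main1}(3) gives $\tau(\F) \le \lceil \sqrt{2}/r \rceil^2 + 1$, which is exactly the stated bound; the specialization $\tau(\F) \le 10$ for $r \ge 0.5$ is then automatic.

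For parts (1) and (2) the naive application of Observation \ref{observation} yields bounds of $5$ and $6$, one point too many, so a finer argument is required. I would follow the same case split as in the observation's proof. If $\nu(\F) = 1$ the family is intersecting, satisfies the $(2,2)$ property, and Theorem \ref{main1}(1) or \ref{main1}(2) delivers the desired bound. If $\nu(\F) = 2$, pick disjoint $A, B \in \F$; by the argument in Observation \ref{observation} we may assume every set in $\F \setminus \{A, B\}$ meets $B$, giving a decomposition $\F = \F_B \sqcup \F_{AB} \sqcup \{A\}$ with $\tau(\F_B) = 1$ by Helly. The remaining piece $\F_{AB} \cup \{A\}$ is an intersecting family of convex $r$-fat sets with the extra feature that every member also meets the fat set $B$. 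The plan is to show that this two-sided localization forces $\tau(\F_{AB} \cup \{A\}) \le t - 1$, where $t = 4$ or $5$ is the bound from Theorem \ref{main1} in the relevant range, so that the total piercing number is again at most $t$.

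The main obstacle is establishing this saving: proving that an intersecting family of $r$-fat sets, every one of which meets two fixed disjoint $r$-fat ``anchor'' sets, can be pierced by $3$ points when $r \ge \sqrt{8}-2$ and by $4$ points when $r \ge 0.68$. I expect this to follow from a refinement of the grid- or lattice-based piercing strategy presumably used in the proof of Theorem \ref{main1}: the forced intersection with $B$ confines the centers of the remaining sets into a neighborhood of $B$ strictly smaller than in the general intersecting case, and one cell of the grid can be eliminated. The numerical thresholds $\sqrt{8}-2$ and $0.68$ should emerge precisely as the values of $r$ for which a $2\times 2$ (respectively slightly finer) arrangement of piercing points still covers the reduced region around $B$.
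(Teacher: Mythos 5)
Your part (3) is exactly the paper's argument: apply Observation \ref{observation} to the class of convex $r$-fat sets together with Assertion (3) of Theorem \ref{main1}, noting $\lceil\sqrt{2}/r\rceil^2\ge 4\ge 3$. That portion is complete and correct.

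For parts (1) and (2), however, the proposal stops at the point where the actual work begins, and the reduction you set up is not the one the paper uses and is not obviously salvageable. You case on $\nu(\F)$ and pick an \emph{arbitrary} disjoint pair $A,B$, so you need: an intersecting family $\F_{AB}$ of $r$-fat sets whose members all meet two disjoint anchors can be pierced by $2$ points (plus one for $A$) when $r\ge\sqrt{8}-2$. But disjointness of $A$ and $B$ only forces $\dist(c_A,c_B)>2r$, and even assuming $\dist(c_A,c_B)>2$, the region $B(c_A,2)\cap B(c_B,2)$ confining $C(\F_{AB})$ can be a lens of height nearly $2\sqrt{3}\approx 3.46$, which certainly cannot be covered by two disks of radius $\sqrt{8}-2\approx 0.83$; even using that $C(\F_{AB})$ has diameter at most $2$, three disks of radius $\sqrt{8}-2$ do not in general cover a planar set of diameter $2$ (the sharp radius for three disks is $1/\sqrt{3}\approx 0.577$ per unit of diameter, i.e.\ $\approx 0.866$ here). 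So the ``saving'' you defer to a refinement of the grid strategy is the entire content of the theorem, and the specific lemma you propose may simply be false for a badly chosen disjoint pair. The paper avoids this by choosing $A,B$ to be the pair \emph{maximizing} $d=\dist(c_A,c_B)$ and casing on $d\le\sqrt{8}$ versus $d>\sqrt{8}$ rather than on $\nu(\F)$: when $d\le\sqrt{8}$ it covers all of $C(\F)$ (intersecting or not) by $4$ (resp.\ $5$) disks of radius $r$ inside an explicit region $L(d)\cap H(m,m-2)$, and when $d>\sqrt{8}$ the lens $B(c_A,2)\cap B(c_B,2)$ is small enough to be covered by $2$ (resp.\ $3$) such disks, after which Lemma \ref{lemma2} adds the two points for $\F_B$ and $A$. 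To complete your proof you would need to import that extremal choice and the explicit coverings (the paper's Figures 1--5 and Lemmas \ref{lemma2}, \ref{lemma5}, \ref{lemma6}); the $\nu$-based reduction alone does not get you there.
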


Our proof methods rely on bounding the piercing numbers $\tau(\F)$ by the number of disks of radius $r$ needed to cover certain bounded regions in the plane.  
In Section 2 we establish preliminary lemmas needed for the proofs of Theorems \ref{main1} and \ref{main2}, and the proofs are then given in Section 3.

\section{Seven lemmas} 

Let $\dist$ denote the Euclidean distance in $\R^2$.  

\begin{lemma}\label{lemmageometry}
Let $r> 0$, and suppose that $\mathcal{D}$ is a family of disks in $\R^2$, each of them of radius at least $r$. Let $C(\mathcal{D})$ be the set of all center-points of disks in $\mathcal{D}$. If there exists a point $c\in \R^2$ such that $C(\mathcal{D})\subset B(c,r)$, then $\bigcap \mathcal{D} \neq \emptyset$. 
\end{lemma}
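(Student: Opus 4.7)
The plan is to show that the common point $c$ itself lies in every disk of $\mathcal{D}$, so no auxiliary construction is needed. Concretely, I would take an arbitrary disk $D \in \mathcal{D}$, denote its center by $c_D$ and its radius by $r_D$, and then bound $\dist(c, c_D)$ from above using the two hypotheses.

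First I would invoke the assumption $C(\mathcal{D}) \subset B(c,r)$ to obtain $\dist(c, c_D) \le r$. Next I would use the assumption that every disk in $\mathcal{D}$ has radius at least $r$ to conclude that $\dist(c, c_D) \le r \le r_D$, which is exactly the condition $c \in B(c_D, r_D) = D$. Since $D$ was arbitrary, this shows $c \in \bigcap \mathcal{D}$, and in particular $\bigcap \mathcal{D} \neq \emptyset$.

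There is no substantive obstacle here: the statement is essentially a restatement of the triangle-style observation that a point which is within distance $r$ of each center automatically lies in each disk, because each disk has radius at least $r$. The only care required is to check that the inequalities $\dist(c,c_D) \le r$ and $r \le r_D$ are both non-strict (matching the convention that $B(\cdot,\cdot)$ denotes a closed disk, as used throughout the paper), so that the chained inequality $\dist(c,c_D) \le r_D$ indeed places $c$ inside $D$. With that convention fixed, the argument is a two-line application of the triangle inequality and nothing more is needed.
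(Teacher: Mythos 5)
Your proof is correct and is essentially identical to the paper's one-line argument: both show that the point $c$ itself pierces every disk, since $\dist(c,c_D)\le r\le r_D$ for each $D\in\mathcal{D}$. You simply spell out the inequality chain in more detail; no further comment is needed.
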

\begin{proof}
We have $\dist(c,p) \le r$ for every $p\in C(\mathcal{D})$, implying $c\in D$ for every $D\in \mathcal{D}$. 
\end{proof}

For an $r$-fat set $S \subset\R^2$ let $c_S\in S$ be a point in $\R^2$ for which $B(c_S,r) \subseteq S\subseteq B(c_S,1)$.

Let $\F$ be a family of $r$-fat sets in $\R^2$ satisfying the conditions of Theorems \ref{main1} or \ref{main2}.
We may assume that 
$|\F| \ge 4$, for otherwise both theorems are trivial. 

Fix $A, B \in \F$ such that $\dist(c_A,c_B) = \max_{F,F'\in \F}\dist(c_F,c_{F'})$, and  write $d=\dist(c_A,c_B)$. By rotating and translating $\F$ we may assume that $c_B$ is the point $(\sqrt{8},0)$, and $c_A$ is to the left of $c_B$, that is, $c_A$ is the point $(\sqrt{8}-d,0)$. We make this choice in order to simplify the computations in the sequel. 

\begin{lemma}\label{distatmost2}
For every $D, E \in \F\setminus \{A,B\}$ we have $\dist(c_D,c_E) \le 2$.
\end{lemma}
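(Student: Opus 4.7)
The plan is a short proof by contradiction that splits according to which intersection property $\F$ satisfies. Suppose some pair $D,E\in\F\setminus\{A,B\}$ has $\dist(c_D,c_E)>2$. Because every $r$-fat set satisfies $D\subseteq B(c_D,1)$ and $E\subseteq B(c_E,1)$, the triangle inequality forces $B(c_D,1)\cap B(c_E,1)=\emptyset$, and in particular $D\cap E=\emptyset$.

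If $\F$ satisfies the $(2,2)$ property (Theorem~\ref{main1}), this is already a contradiction, since every two sets in $\F$ must intersect. So suppose $\F$ satisfies the $(4,3)$ property (Theorem~\ref{main2}). Here I invoke the maximality in the choice of $A$ and $B$: one has $d=\dist(c_A,c_B)\ge \dist(c_D,c_E)>2$, so by the same argument $A\cap B=\emptyset$. Now consider the four sets $\{A,B,D,E\}$. Every $3$-element subset contains either the pair $\{A,B\}$ or the pair $\{D,E\}$, and both of these pairs are disjoint, so no triple among the four has a common point, contradicting the $(4,3)$ property.

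The only ingredients used are the definition of $r$-fat (to conclude each set sits inside its unit disk), the maximality of $d$ (used only in the $(4,3)$ case), and a trivial case analysis of which triples among $\{A,B,D,E\}$ could have a common point. No estimates involving $r$ enter this lemma. I expect no real obstacle; the mildest care needed is to remember to treat the $(4,3)$ case by first ensuring $A\cap B=\emptyset$ via the maximality assumption before ruling out all four triples.
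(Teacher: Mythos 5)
Your proof is correct and follows essentially the same route as the paper's: the paper handles $d\le 2$ as trivial by maximality and, for $d>2$, notes $A\cap B=\emptyset$ forces the $(4,3)$ case and that a disjoint pair $D,E$ would make $\{A,B,D,E\}$ violate the $(4,3)$ property. Your contradiction framing and the explicit check that every triple contains a disjoint pair are just a slight repackaging of the same argument.
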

\begin{proof}
The lemma is trivial if $d\le 2$. If $d>2$ then $\F$ is non-intersecting, thus it must satisfy the $(4,3)$ property, and moreover, $A \cap B = \emptyset$. If in addition $D\cap E = \emptyset$ for some $D, E \in \F\setminus \{A,B\}$, then the  sets $A,B,D,E$ violate the $(4,3)$ property. Therefore $D,E$ intersect, implying $\dist(c_D,c_E) \le 2$. 
\end{proof}

By the same arguments as in the proof of Observation \ref{observation} we have: 
\begin{lemma}\label{lemma1}
If $A,B \in \F$ are disjoint then $\nu(\F)=2$. Moreover, either $A$ intersects every disk in $\F\setminus \{B\}$ or $B$ intersects every disk in $\F\setminus \{A\}$.
\end{lemma}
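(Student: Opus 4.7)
The plan is to mirror the corresponding step in the proof of Observation \ref{observation}, since the hypothesis that $A,B$ are disjoint forces us into the $(4,3)$ regime. First I would observe that if $\F$ satisfies the $(2,2)$ property then any two members of $\F$ intersect, so the hypothesis $A\cap B=\emptyset$ cannot be met and the lemma is vacuous. Hence I may assume throughout that $\F$ satisfies the $(4,3)$ property.

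To prove $\nu(\F)=2$, the lower bound is immediate from $A\cap B=\emptyset$. For the upper bound I would argue by contradiction: if $X,Y,Z\in\F$ were pairwise disjoint, then because $|\F|\ge 4$ I can pick a fourth set $W\in\F$, and every triple in $\{X,Y,Z,W\}$ contains two of the disjoint sets $X,Y,Z$, so no triple has a common point --- contradicting the $(4,3)$ property.

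For the ``moreover'' assertion I would argue by contradiction, assuming both alternatives fail: pick $D\in\F\setminus\{B\}$ with $A\cap D=\emptyset$ and $E\in\F\setminus\{A\}$ with $B\cap E=\emptyset$. Since $A,B$ are nonempty we actually have $D,E\in\F\setminus\{A,B\}$. Now $\nu(\F)=2$ forbids $\{A,B,D\}$ from being a matching; combined with $A\cap B=\emptyset$ and $A\cap D=\emptyset$ this forces $B\cap D\ne\emptyset$, which in particular gives $D\ne E$. I would then apply the $(4,3)$ property to the quadruple $\{A,B,D,E\}$: the three known disjoint pairs $(A,B)$, $(A,D)$ and $(B,E)$ cover every possible triple from this quadruple (namely $\{A,B,D\}$, $\{A,B,E\}$, $\{A,D,E\}$, $\{B,D,E\}$), so no triple has a common point, the desired contradiction.

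Essentially no obstacle is expected beyond careful bookkeeping, most notably ruling out the case $D=E$ using $\nu(\F)=2$, and then matching each of the four triples in $\{A,B,D,E\}$ with one of the three disjoint pairs. Once these enumerations are in place the argument is exactly parallel to the one appearing in Observation \ref{observation}.
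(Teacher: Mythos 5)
Your proof is correct and follows essentially the same route as the paper, which simply invokes ``the same arguments as in the proof of Observation \ref{observation}'': a matching of size $3$ plus a fourth set kills the $(4,3)$ property, and if $D\cap A=E\cap B=\emptyset$ then the foursome $A,B,D,E$ kills it as well. Your extra bookkeeping (the vacuity under the $(2,2)$ property, and ruling out $D=E$ via $\nu(\F)=2$) only makes explicit what the paper leaves implicit.
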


From now on we will assume without loss of generality that $B$ intersects every set in $\F\setminus \{A\}$. Thus if $\F$ is non-intersecting then $A,B$ are disjoint and $\F = F_B \cup F_{AB} \cup \{A\},$ where $F_B$ and $F_{AB}$ are defined as before.

For a subfamily $\F' \subseteq \F$ let $C(\F') = \{c_F \mid F\in \F'\}$. 
For any two real numbers $a\ge b$ let $H(a,b)=\{(x,y)\in \R^2 \mid b\le y \le a\}$. Define 
\[ L(d) = \begin{cases} \Big(B(c_A,d) \cap B(c_B,2)\Big) \cup \{c_A\} & d \le \sqrt{8} \\ B(c_A,2) \cap B(c_B,2) & d > \sqrt{8} \end{cases} .\] 

By the maximality of $d$ we have $C(\F)\subset B(c_A,d)$. Moreover, by our assumption, $B$ intersects every set in $\F\setminus \{A\}$, and therefore we have also  $C(\F\setminus \{A\})\subset B(c_B,2)$. Thus if  
 $d\le \sqrt{8}$ then $C(\F)\subset L(d)$,  and if $d>\sqrt{8}$, then $C(\F_{AB})\subset L(d)$ by definition.

Let $m$ denote the maximum $y$-coordinate of a point in $C(\F)$ in the case $d\le \sqrt{8}$, or in $C(\F_{AB})$ otherwise. By computing the maximum of the function $y$ in the domains $$\{(d,x,y)\mid (x-(\sqrt{8}-d))^2+y^2\le d^2, ~(\sqrt{8}-x)^2+y^2\le 4, ~0\le d\le \sqrt{8} \},$$ or $$\{(d,x,y)\mid (x-(\sqrt{8}-d))^2+y^2\le 4, ~(\sqrt{8}-x)^2+y^2\le 4, ~d\ge 0\}, $$ we obtain  

\begin{lemma}\label{maxheight}
 $m \le \sqrt{3.5}$.
\end{lemma}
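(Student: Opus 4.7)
The plan is to bound $m$ by the maximum of the coordinate function $y$ over the whole region $L(d)$ (which contains $C(\F)$ or $C(\F_{AB})$ by construction) and then to optimize the resulting expression over the parameter $d$. Since $y$ is linear, its supremum on the compact lens-shaped region $L(d)$ is attained on the boundary, and in fact at one of the (at most two) intersection points of the two bounding circles, apart from the degenerate sub-cases in which one disk contains the other. I would therefore handle the two regimes $d\le\sqrt{8}$ and $d>\sqrt{8}$ separately.

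In the case $d\le\sqrt{8}$, I would intersect the two bounding circles of $B(c_A,d)\cap B(c_B,2)$, namely
\[
(x-(\sqrt{8}-d))^2+y^2=d^2,\qquad (\sqrt{8}-x)^2+y^2=4.
\]
Subtracting eliminates $y^2$ and yields $x=\sqrt{8}-2/d$, whence $y^2=4-4/d^2$. This is strictly increasing in $d$ on $[1,\sqrt{8}]$, so it attains its maximum at $d=\sqrt{8}$ with $y^2=7/2$. The degenerate sub-cases contribute less: the isolated point $c_A$ has $y=0$, and when $d\le 1$ the inclusion $B(c_A,d)\subseteq B(c_B,2)$ forces $y\le d\le 1<\sqrt{7/2}$.

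In the case $d>\sqrt{8}$, the same subtraction applied to the two radius-$2$ boundary circles
\[
(x-(\sqrt{8}-d))^2+y^2=4,\qquad (\sqrt{8}-x)^2+y^2=4
\]
gives $x=\sqrt{8}-d/2$ and therefore $y^2=4-d^2/4$. Since $d^2>8$ here, this quantity is strictly less than $2<7/2$. Combining the two cases yields $m\le\sqrt{7/2}=\sqrt{3.5}$, as claimed.

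The argument is a routine Lagrange-style calculation with no serious obstacle. The only points deserving attention are verifying that the linear function $y$ attains its maximum on the boundary rather than in the interior (which is immediate) and checking the monotonicity of $4-4/d^2$ so that the supremum over $d$ in the first case is attained exactly at the threshold $d=\sqrt{8}$ where the two definitions of $L(d)$ meet.
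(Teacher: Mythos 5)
Your proposal takes essentially the same route as the paper, which merely asserts that the bound follows from maximizing $y$ over the two lens-shaped domains; your explicit computation ($y^2=4-4/d^2$ at the circle intersections for $d\le\sqrt{8}$, increasing to $3.5$ at $d=\sqrt{8}$, and $y^2=4-d^2/4<2$ for $d>\sqrt{8}$) is correct and supplies the details the paper omits. One minor imprecision: for $1<d<\sqrt{2}$ the maximum of $y$ on the lens is attained at the topmost point of $B(c_A,d)$, which lies inside $B(c_B,2)$ even though neither disk contains the other, rather than at a circle-intersection point; but that point has height $d<\sqrt{2}<\sqrt{3.5}$, so the stated bound is unaffected.
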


The following three lemmas will be the key tool in our proofs:
\begin{lemma}\label{lemma2}
Suppose that the set $L(d) \cap H(m,m-2)$ is contained in the union of at most $k$ disks of radius $r$. then \[ \tau(\F) \le \begin{cases} k & d \le \sqrt{8} \\ k+2 & d>\sqrt{8} \end{cases} \]
\end{lemma}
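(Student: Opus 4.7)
The overall strategy is to translate a covering of the set $L(d)\cap H(m,m-2)$ by $k$ disks of radius $r$ into a piercing of $\F$ (or of $\F_{AB}$ in Case 2) by $k$ points, using Lemma \ref{lemmageometry} applied to the disks $B(c_F,r)\subseteq F$. Concretely, if $B(p_1,r),\ldots,B(p_k,r)$ cover $L(d)\cap H(m,m-2)$, then grouping the sets $F$ by which covering disk contains $c_F$ puts every $c_F$ in a disk $B(p_i,r)$ of radius $r$; since each $F$ contains a disk of radius $r$ around $c_F$, Lemma \ref{lemmageometry} yields $p_i\in F$, so $\{p_1,\ldots,p_k\}$ pierces all such $F$.

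The substantive task is therefore to verify that the centers $c_F$ all lie inside $L(d)\cap H(m,m-2)$. The containment in $L(d)$ is immediate from the maximality of $d$ (giving $C(\F)\subseteq B(c_A,d)$) and from the standing assumption that $B$ meets every set in $\F\setminus\{A\}$ (giving $C(\F\setminus\{A\})\subseteq B(c_B,2)$), and from the definition of $L(d)$ in each case. For the horizontal strip, I would first reflect $\F$ across the $x$-axis if necessary (this fixes $c_A$ and $c_B$, so $d$ and $L(d)$ are unchanged) so that we may assume the maximum $y$-coordinate among points of $C(\F\setminus\{A,B\})$ is nonnegative. By Lemma \ref{distatmost2} any two such centers have distance at most $2$, so their $y$-coordinates span an interval of length at most $2$, hence lie in $[m-2,m]$. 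For $c_A,c_B$, which sit at $y=0$, I would invoke Lemma \ref{maxheight}: since $m\le\sqrt{3.5}<2$ we have $m-2<0\le m$, so $c_A,c_B\in H(m,m-2)$ as well. This handles the case $d\le\sqrt{8}$ and gives $\tau(\F)\le k$.

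For $d>\sqrt{8}$ we are automatically in the setting of Theorem \ref{main2} (the $(2,2)$ hypothesis of Theorem \ref{main1} forces $d\le 2$), so $A\cap B=\emptyset$ and $\F=\F_B\cup\F_{AB}\cup\{A\}$. Repeating the argument above for $\F_{AB}$ alone (note $\F_{AB}\subseteq\F\setminus\{A,B\}$, so Lemma \ref{distatmost2} still applies and no reflection issue arises with $c_A,c_B$) shows $C(\F_{AB})\subseteq L(d)\cap H(m,m-2)$ and therefore $\tau(\F_{AB})\le k$. It remains to pierce the leftover. The single point $c_A$ pierces $\{A\}$. For $\F_B$, exactly as in the proof of Observation \ref{observation} the family must satisfy the $(3,3)$ property (otherwise a disjoint triple in $\F_B$ together with $A$ would violate $(4,3)$), and since the sets are convex Helly's theorem gives $\tau(\F_B)\le 1$. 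Adding up yields $\tau(\F)\le k+2$.

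The principal obstacle I anticipate is the bookkeeping in the first paragraph above: one needs the reflection to ensure that $c_A,c_B$ (which stay on the $x$-axis) and all other centers simultaneously fit into $H(m,m-2)$, and this is where the sharp bound $m\le\sqrt{3.5}$ from Lemma \ref{maxheight} is essential. Everything else is assembling pieces already established in the preceding lemmas and in the proof of Observation \ref{observation}.
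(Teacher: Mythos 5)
Your proposal follows the paper's proof almost exactly: convert the disk cover into a piercing set via Lemma \ref{lemmageometry}, check that the relevant centers lie in $L(d)\cap H(m,m-2)$, and in the case $d>\sqrt{8}$ add two points for $A$ and for $\F_B$ (the latter via the $(3,3)$ property and Helly, as in Observation \ref{observation}). The one step I would object to is the reflection about the $x$-axis: the hypothesis of the lemma is a covering of $L(d)\cap H(m,m-2)$ where $m$ is determined by the given family, and reflecting $\F$ changes $m$ (to $\max(0,-\min_F y_{c_F})$ rather than to $2-m$ or anything compatible), so the assumed covering is not available for the reflected configuration and the ``without loss of generality'' does not go through as stated. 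Fortunately the reflection is also unnecessary: if the maximum $y$-coordinate over $C(\F\setminus\{A,B\})$ is negative then $m=0$, and $C(\F\setminus\{A\})\subset B(c_B,2)$ already forces every $y$-coordinate into $[-2,0]=[m-2,m]$, while $c_A$ sits at $y=0$; in the remaining case your argument (Lemma \ref{distatmost2} for the non-extremal centers plus $m\le\sqrt{3.5}<2$ from Lemma \ref{maxheight} to handle $c_A,c_B$) is exactly what is needed and is in fact more careful than the paper's own one-line justification of this containment.
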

\begin{proof}
If $d\le \sqrt{8}$, then Lemma \ref{distatmost2} implies $C(\F)\subset H(m,m-2)$, and by the arguments above,   $C(\F)\subset L(d) \cap H(m,m-2)$.    
Therefore the conditions of the lemma imply that there exist $k$ disks of radius $r$ containing $C(\F)$ in their union. By Lemma \ref{lemmageometry} this means that $\F$ can be pierced by the center-points of these $k$ disks. 

If $d > \sqrt{8}$, then $A,B$ are disjoint, and as in the proof of Observation \ref{observation}, $F_B$ is a family of convex sets satisfying the $(3,3)$ property, which entails by Helly's theorem, $\tau(\F_B)\le 1$. Furthermore, Lemma \ref{distatmost2} implies  $C(\F_{AB}) \subset L(d) \cap H(m,m-2)$, and thus $C(\F_{AB})$ is contained in the union of $k$ disks of radius $r$. It  
follows from Lemma \ref{lemmageometry} that $\tau(\F_{AB})\le k$, implying $\tau(\F)\le \tau(\F_{AB}) + \tau(F_B) +\tau(\{A\}) \le k+2$.    
\end{proof}

Define a (possibly empty) rectangle $R\subset \R^2$ as follows.  
\[ R = \begin{cases} \big\{(x,y)\mid \sqrt{8}-2 \le x \le \sqrt{8},~ -1 \le y \le 0  \big\} & d \le \sqrt{8} \\ \big\{(x,y)\mid \sqrt{8}-2\le x\le \sqrt{8}-d+2,~ -1\le y\le 0 \big\} & d > \sqrt{8} \end{cases} .\]

\begin{lemma}\label{lemma5}
If the set $\big((L(d) \setminus\{c_A\}) \cap H(1,-1)\big) \cup R$ is contained in the union of $k$ disks of radius $r$, then so is $(L(d) \setminus\{c_A\}) \cap H(a,a-2)$, for any real number $a$.
\end{lemma}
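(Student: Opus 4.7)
The plan is to reduce covering the slice $S_a := (L(d)\setminus\{c_A\}) \cap H(a,a-2)$ to covering the reference set $S_0 := \big((L(d)\setminus\{c_A\}) \cap H(1,-1)\big) \cup R$. Specifically, I shift $S_a$ downward by $a-1$ to land inside $H(1,-1)$, show that the shifted set $T_a$ is contained in $S_0$, and then shift the $k$ given disks of radius $r$ back up by $a-1$, which preserves their radii. By the $x$-axis symmetry of $L(d)$ I may assume $a \ge 1$ (otherwise reflect through the $x$-axis first, which sends $H(a,a-2)$ to $H(2-a,-a)$ and leaves $L(d)$ invariant).

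The key step is to show $T_a \subseteq S_0$. A point $(x,y)$ lies in $T_a$ exactly when $y \in [-1,1]$ and $(x,y+a-1) \in L(d)\setminus\{c_A\}$. I split the argument according to whether $y \ge (1-a)/2$ or $y < (1-a)/2$, i.e., whether $y$ lies in the vertical segment joining $(x,-(y+a-1))$ and $(x,y+a-1)$.

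When $y \ge (1-a)/2$, the $x$-axis symmetry of $L(d)$ gives that $(x, -(y+a-1))$ is also in $L(d)$. The set $L(d)\setminus\{c_A\}$ is contained in the convex lens $B(c_A,d)\cap B(c_B,2)$ if $d \le \sqrt{8}$, or in $B(c_A,2)\cap B(c_B,2)$ if $d > \sqrt{8}$; convexity then places the entire vertical segment between $(x,-(y+a-1))$ and $(x,y+a-1)$ inside this lens, and in particular $(x,y)$ lies in it. So $(x,y) \in L(d)\setminus\{c_A\}$ unless $(x,y) = c_A$; in this exceptional subcase, the membership $(\sqrt{8}-d,a-1) \in B(c_B,2)$ forces $d^2 + (a-1)^2 \le 4$, hence $d \le 2$, and one verifies directly that $c_A = (\sqrt{8}-d,0)$ lies in $R$.

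When $y < (1-a)/2$, automatically $y \in [-1,0)$, matching the $y$-range of $R$. A short computation with the two bounding disks shows that every point of $L(d)\setminus\{c_A\}$ has $x$-coordinate in $[\sqrt{8}-2,\sqrt{8}]$ when $d \le \sqrt{8}$, and in $[\sqrt{8}-2,\sqrt{8}-d+2]$ when $d > \sqrt{8}$, which are precisely the $x$-ranges of $R$ in each case. Hence $(x,y) \in R$, completing the containment. The main subtlety I anticipate is disentangling the two regimes $d \le \sqrt{8}$ versus $d > \sqrt{8}$ and handling the outlier point $c_A$ cleanly, but the elementary case analysis above pins everything down; the final passage from $T_a \subseteq S_0$ to a cover of $S_a$ by $k$ disks of radius $r$ is then automatic, since translating a disk preserves its radius.
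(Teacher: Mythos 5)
Your proof is correct and follows essentially the same route as the paper's: reduce to $a\ge 1$ by reflecting about the $x$-axis, translate the slice down by $a-1$, and show the translated set lands in $\big((L(d)\setminus\{c_A\})\cap H(1,-1)\big)\cup R$ via a two-case split. The only (harmless) differences are that you split at $y=(1-a)/2$ and invoke convexity and symmetry of the lens where the paper splits at $y=0$ and notes that moving a point toward the $x$-axis decreases its distance to $c_A$ and $c_B$; you also spell out the $x$-range computation for $R$ and the $c_A$ edge case, which the paper leaves implicit.
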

\begin{proof}
We first claim that it is enough to prove the lemma for 
$1\le a\le \sqrt{3.5}$. Indeed, if $a< 1$ we reflect the set $(L(d)\setminus\{c_A\}) \cap H(a,a-2)$ about the $x$-axis, and prove the lemma for the reflected set, and if $a > \sqrt{3.5}$ then by Lemma \ref{maxheight} we have $$(L(d) \setminus\{c_A\}) \cap H(a,a-2) \subseteq (L(d) \setminus\{c_A\}) \cap H(\sqrt{3.5},\sqrt{3.5}-2),$$ proving the claim. 

 So let $1\le a\le \sqrt{3.5}$. We claim that if $\big((L(d) \setminus\{c_A\}) \cap H(1,-1)\big) \cup R $ is contained in $\bigcup_{i=1}^{k} B((x_i,y_i),r),$ then $(L(d) \setminus\{c_A\}) \cap H(a,a-2)$ is contained in $\bigcup_{i=1}^{k} B((x_i,y_i+a-1),r).$   

To prove this, we show that if 
$(x,y) \in (L(d) \setminus\{c_A\}) \cap H(a,a-2)$ then
\begin{equation} 
(x,y-(a-1)) \in \big((L(d)\setminus\{c_A\}) \cap H(1,-1)\big) \cup R. 
\end{equation}
To see that (1) holds, note that if $a-2 \le y \le a-1$ then  $(x,y-(a-1)) \in R$, and  
 for $a-1 < y \le a$ we have both $$(x-(\sqrt{8}-d))^2+(y-(a-1))^2 < (x-(\sqrt{8}-d))^2+y^2 \le d^2$$ and $$(\sqrt{8}-x)^2+(y-(a-1))^2 < (\sqrt{8}-x)^2+y^2 \le 4,$$  
implying $(x,y-(a-1))\in L(d)\setminus \{c_A\}$. 
Moreover, $y \le a$ entails $y-(a-1)\le 1$, and therefore (1) is true.
\end{proof}

Let $R'=R_1\cup R_2$ be a union of rectangles, where  
$$R_1 = \big\{(x,y)\mid \sqrt{8}-d \le x \le \sqrt{8}-2, ~1-\sqrt{3.5} \le y \le 0  \big\} \text{ and }$$  
$$R_2=\big\{(x,y)\mid \sqrt{8}-2 \le x \le \sqrt{8}, ~-1 \le y \le 0  \big\}.$$

Applying similar arguments to those in the proof of Lemma \ref{lemma5} we obtain:

\begin{lemma}\label{lemma6}
For $d \le \sqrt{8}$, if $\big(L(d) \cap H(1,-1)\big) \cup R'$ is contained in the union of $k$ disks of radius $r$, then so is $L(d) \cap H(a,a-2)$, for any real number $a$. 
\end{lemma}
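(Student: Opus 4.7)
The plan is to mimic the proof of Lemma \ref{lemma5} essentially verbatim, modifying only the handling of the center point $c_A$, which now lies in the source set $L(d)$ and accounts for the extra rectangle $R_1$ in the definition of $R'$.

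First I would carry out the same reduction to the case $1 \le a \le \sqrt{3.5}$: for $a < 1$ one reflects about the $x$-axis, and for $a > \sqrt{3.5}$ one uses that every point of $L(d)$ satisfies $|y| \le \sqrt{3.5}$ when $d \le \sqrt{8}$. The latter is verified by intersecting $\partial B(c_A, d)$ with $\partial B(c_B, 2)$ and checking that the maximum $y$-coordinate of an intersection point is maximized, at the value $\sqrt{3.5}$, precisely when $d = \sqrt{8}$. This uniform bound on $L(d)$ itself (and not merely on $C(\F)$) is the reason for the hypothesis $d \le \sqrt{8}$ in the lemma.

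Next I would show that for $1 \le a \le \sqrt{3.5}$, the vertical translation $(x,y) \mapsto (x, y - (a-1))$ sends $L(d) \cap H(a, a-2)$ into $(L(d) \cap H(1,-1)) \cup R'$; translating the covering disks $B((x_i,y_i),r)$ to $B((x_i,y_i+a-1),r)$ then yields the conclusion. For points $(x,y) \in L(d) \setminus \{c_A\}$ the verification is identical to Lemma \ref{lemma5}: such points lie in $B(c_B, 2)$, so $\sqrt{8}-2 \le x \le \sqrt{8}$, placing the lower-strip translates (those with $a-2 \le y \le a-1$) into $R_2 \subseteq R'$, while the upper-strip translates (those with $a-1 < y \le a$) remain in $L(d) \setminus \{c_A\}$ with $y$-coordinate in $(0,1]$ by the same Pythagorean shrinking-distance argument.

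The only genuinely new case is the singleton $(x,y) = c_A = (\sqrt{8}-d, 0)$, whose translate is $(\sqrt{8}-d, 1-a)$ with $1-a \in [1-\sqrt{3.5}, 0]$. If $d \le 2$ then $\sqrt{8}-d \ge \sqrt{8}-2$ and the translate lies in $R_2$; if $d > 2$ then $\sqrt{8}-d < \sqrt{8}-2$ and the translate lies in $R_1$. Either way it sits in $R_1 \cup R_2 = R'$, which is precisely the reason $R_1$ was introduced. There is no real obstacle here: the main subtlety is recognizing that the $y$-range $[1-\sqrt{3.5}, 0]$ of $R_1$ is exactly what the reduction to $1 \le a \le \sqrt{3.5}$ forces on translates of $c_A$, and that the $x$-range of $R_1$ is exactly the set of possible $x$-coordinates of $c_A$ when $d > 2$.
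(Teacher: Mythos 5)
Your argument is correct and is exactly the ``similar arguments'' the paper invokes for this lemma: it carries the translation-and-reflection scheme of Lemma \ref{lemma5} over verbatim and correctly isolates the one new case, the translate $(\sqrt{8}-d,\,1-a)$ of $c_A$, which lands in $R_2$ when $d\le 2$ and in $R_1$ when $d>2$, this being precisely the purpose of $R_1$. One tiny slip in the justification: the upper bound $x\le \sqrt{8}$ for points of $L(d)$ comes from membership in $B(c_A,d)$ (whose rightmost point is $(\sqrt{8},0)$), not from $B(c_B,2)$, which only supplies $x\ge \sqrt{8}-2$; the conclusion is unaffected.
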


\section{Proof of Theorems \ref{main1} and \ref{main2}}
For any number $a\in \R$ let $H^+(a)=\{(x,y)\in \R^2 \mid y\ge a\}$ and $H^-(a)=\{(x,y)\in \R^2 \mid y\le a\}$. Note that for two real numbers $a\ge b$ we have $H(a,b) = H^-(a) \cap H^+(b)$. 

Let $A,~B , ~d, ~L(d), ~m, ~R, ~R'$ be defined as in the previous section. If $A,B$ intersect then $d\le 2$, and thus $C(\F)\subset B(c_A,2)\cap B(c_B,2)$. If $A,B$ are disjoint then, as before, we assume that $B$ intersects every set in $\F\setminus \{A,B\}$, and therefore, $C(\F\setminus \{A,B\})\subset B(c_A,d)\cap B(c_B,2)$.

\subsection{The case $\sqrt{8}-2 \le r \le 1$.}
we distinct three subcases. 
\medskip  

{\bf Case 3.1.1.} $d \le \sqrt{8}$ and there exists  $F\in \F$ such that $c_F \in H^+(1.1)$.
In this case, by Lemma \ref{distatmost2}, $C(\F\setminus \{A,B\}) \subset H^+(-0.9)$. This implies $$C(\F)\subseteq  L(d) \cap H^+(-0.9).$$
Observe that $L(d) \cap H^+(-0.9)$ is  contained in the union of four disks 
$\bigcup_{i=1}^4 B(p_i,\sqrt{8}-2)$, where $p_1=((\sqrt{8}-2)\cos (0.24\pi),(\sqrt{8}-2)\sin (0.24\pi))$, 
$p_2=(2.01,1.053)$, $p_3=(2.4972,-0.115)$ and $p_4=(1.64,-0.33)$  
(see Figure 1). 
Lemma \ref{lemmageometry} thus implies $\tau(\F)\le 4$. 
\bigskip 
\begin{figure}
\centering
\includegraphics[scale=0.53]{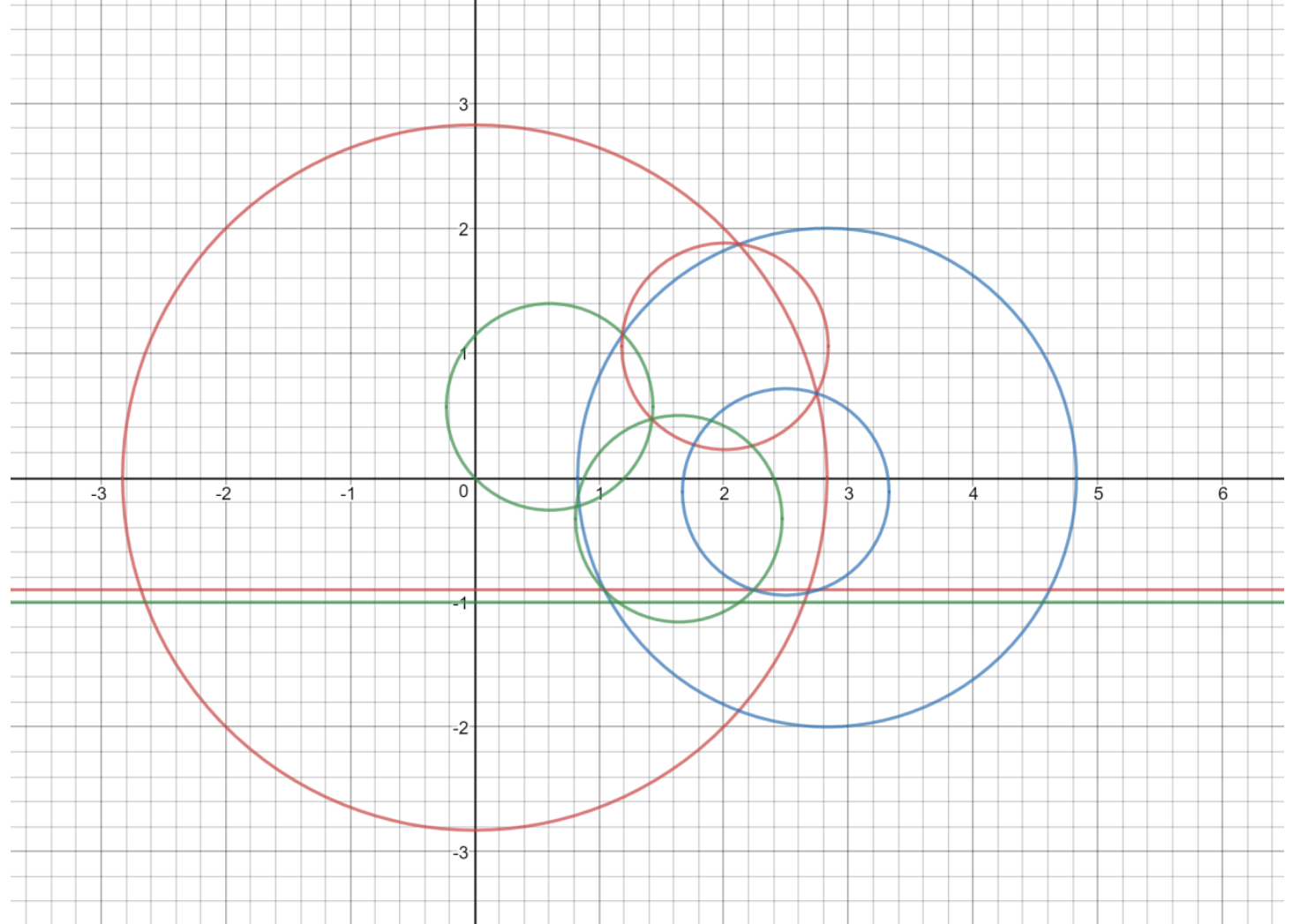}
\caption{$L(d)\cap H^+(-0.9)$ is contained in the union of four disks of radius $\sqrt{8}-2$ in Case 3.1.1.}
\end{figure}

{\bf Case 3.1.2.} $d \le \sqrt{8}$ and $C(\F) \subset H^-(1.1)$.
In this case  we have  $$C(\F)\subset L(d)\cap H^-(1.1)\subset \bigcup_{i=1}^4 B(q_i,\sqrt{8}-2),$$ where  $q_1$ is the point $((\sqrt{8}-2)\cos(0.24\pi),(\sqrt{8}-2)\sin (0.24\pi))$,  $q_2=(1.5739,-0.6133)$, $q_3=(2.5357,-0.204)$, and  $q_4=(1.95,0.7)$ (see Figure 2). Thus again, by Lemma \ref{lemmageometry} we have  $\tau(\F)\le 4$, and in particular Assertion (1) in Theorem \ref{main1} is proved.
 \begin{figure}
\centering
\includegraphics[scale=0.43]{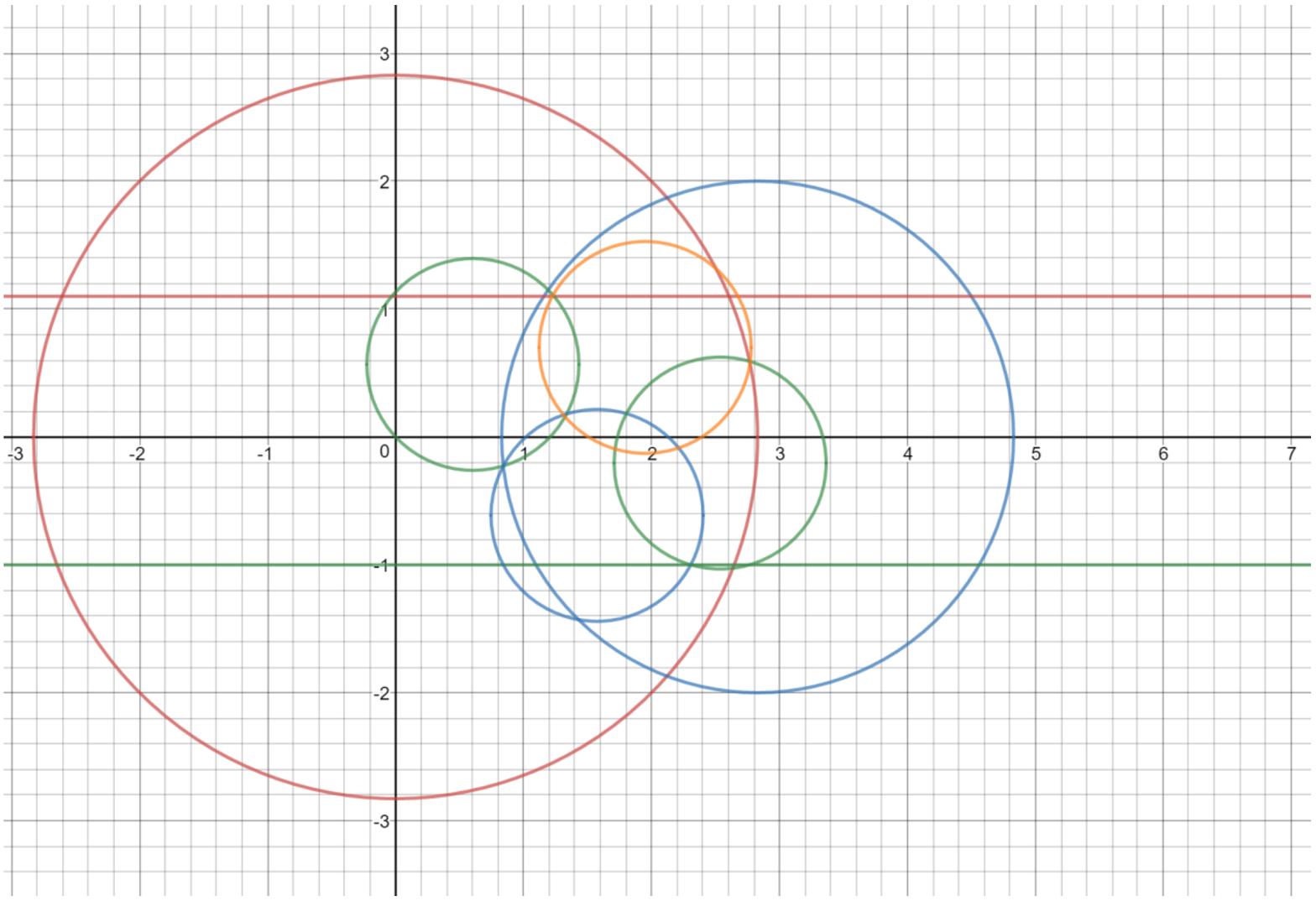}
\caption{$L(d)\cap H^-(1.1)$ is contained in the union of four disks of radius $\sqrt{8}-2$ in Case 3.1.2.}
\end{figure}
\medskip

{\bf Case 3.1.3.} $d > \sqrt{8}$. In this case
$A, B$ are disjoint, and we assume that $B$ intersects every set in $\F\setminus \{A,B\}$. Let $\F_B$ and $\F_{AB}$ as before. Then we have $\tau(\F_B \cup \{A\})\le 2$.
Furthermore, we observe that $C(\F_{AB}) \subset L(d)$ and 
$$
 L(d) \subset B((\sqrt{2},2-\sqrt{2}),\sqrt{8}-2)\cup B((\sqrt{2},\sqrt{2}-2),\sqrt{8}-2)
$$ (see Figure 3). Therefore, by Lemma \ref{lemmageometry} we have $\tau(\F_{AB})\le 2$, implying $\tau(\F)\le 4$. 
This completes the proof of Assertion (1) in Theorem \ref{main2}.
\begin{figure}
\centering
\includegraphics[scale=0.68]{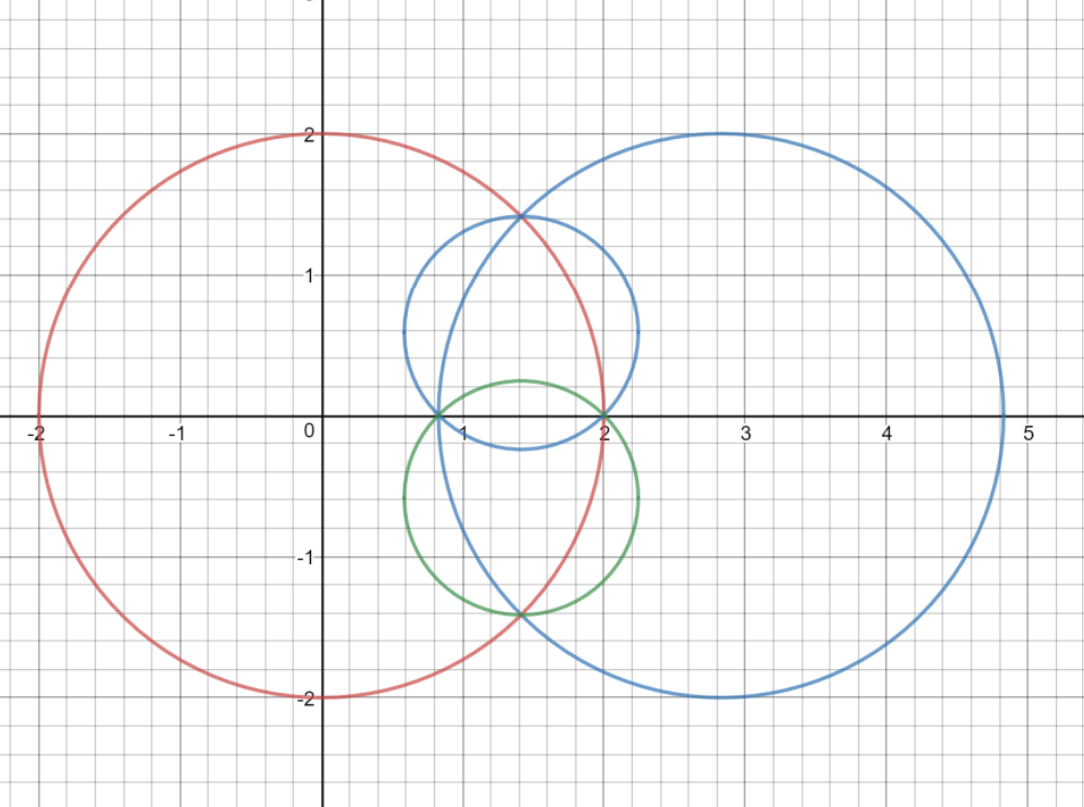}
\caption{$L(d)$ is contained in the union of two disks of radius $\sqrt{8}-2$ in Case 3.1.3.}
\end{figure}

\subsection{The case $0.68\le r< \sqrt{8}-2$.}
Here we consider two subcases. 
\medskip

{\bf Case 3.2.1.} $d \le \sqrt{8}$. In this case, by Lemma \ref{lemma2}, in order to prove Assertions (2) in Theorems \ref{main1} and \ref{main2} it suffices to show that the set $L(d) \cap H(m,m-2)$ is contained in the union of five disks of radius $0.68$. 
 Moreover, by Lemma \ref{lemma6}, this will follow if show that the set $\big(L(d) \cap H(1,-1)\big)\cup R'$ is contained in the union of five disks of radius $0.68$. Now observe that the union of disks of radius $0.68$ with center-points 
$p_1 = (0.49,-0.465), ~p_2 = (1.477, 0.6262), ~p_3 = (2.445,-0.456),$ $p_4 = (1.435, 0.435)$ and  $p_5 = (2.3162, 0.55)$, contains $\big(L(d) \cap H(1,-1)\big)\cup R'$ (see Figure 4). In particular Assertion (2) in Theorem \ref{main1} is proved.
\begin{figure}
\centering
\includegraphics[scale=0.29]{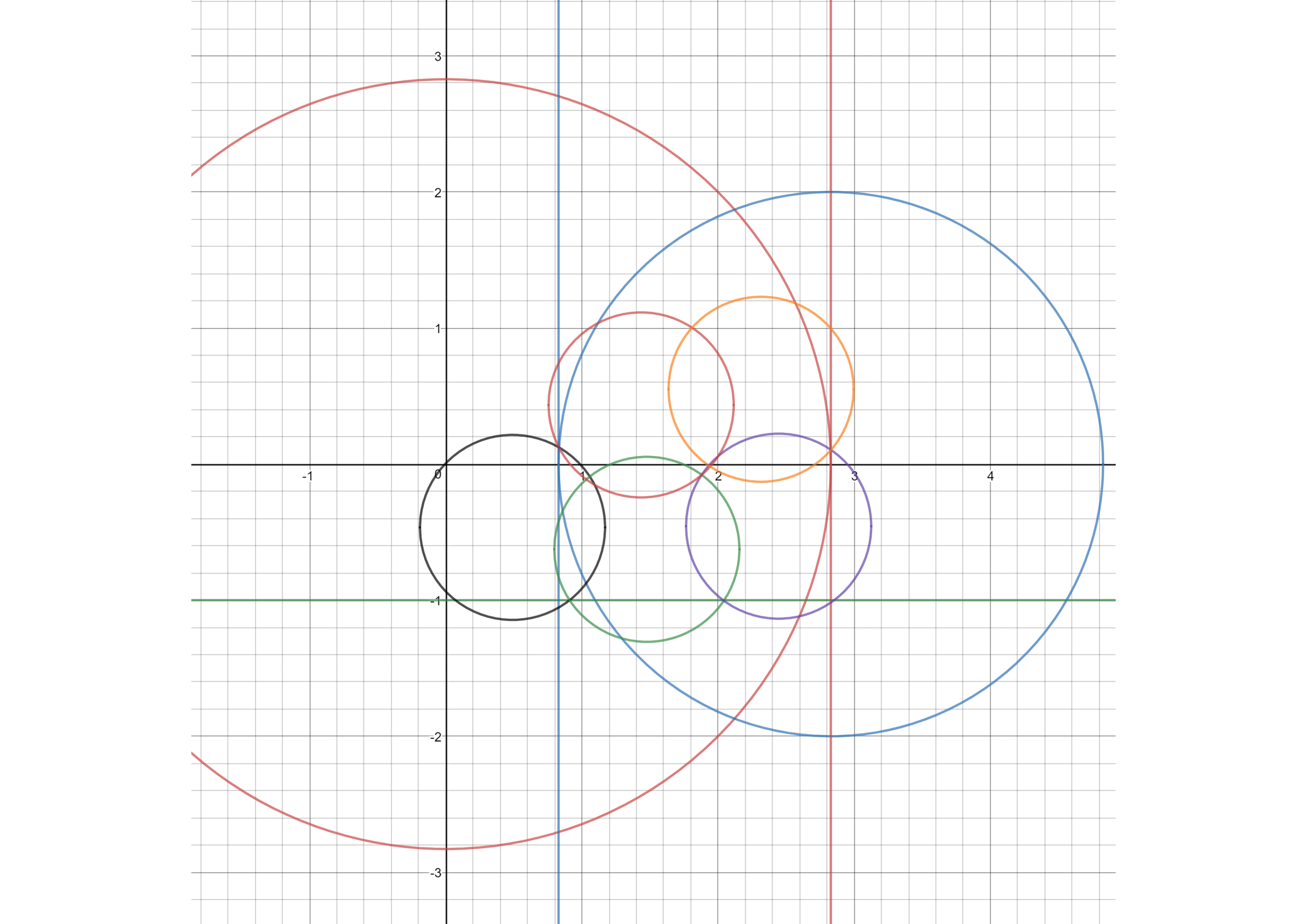}
\caption{$L(d) \cap H(1,-1)$ is contained in the union of five disks of radius $0.68$ in Case 3.2.1.}
\end{figure}
\medskip

{\bf Case 3.2.2.} $d > \sqrt{8}$. 
By Lemma \ref{lemma2},  it is enough to show that $L(d) \cap H(m,m-2)$ is contained in the union of $3$ disks of radius $0.68$. Since $L(d) = L(d) \setminus \{c_A\}$ when $d > \sqrt{8}$, this will follow from Lemma \ref{lemma5}
if we show that the set  
$\big((L(d)\setminus \{c_A\}) \cap H(1,-1)\big)\cup R$ is contained in the union of $3$ disks of radius $0.68$. Indeed, the set $\big((L(d)\setminus \{c_A\}) \cap H(1,-1)\big)\cup R$ is contained in  $\bigcup_{i=1}^3 B(q_i,0.68)$, where $q_1 = (\sqrt{2},0.43), q_2 = (1.1, -0.5),$ and $q_3 = (1.7, -0.5)$ (see Figure 5). This concludes the proof of Assertion (2) in Theorem \ref{main2}.
\begin{figure}
\centering
\includegraphics[scale=0.3]{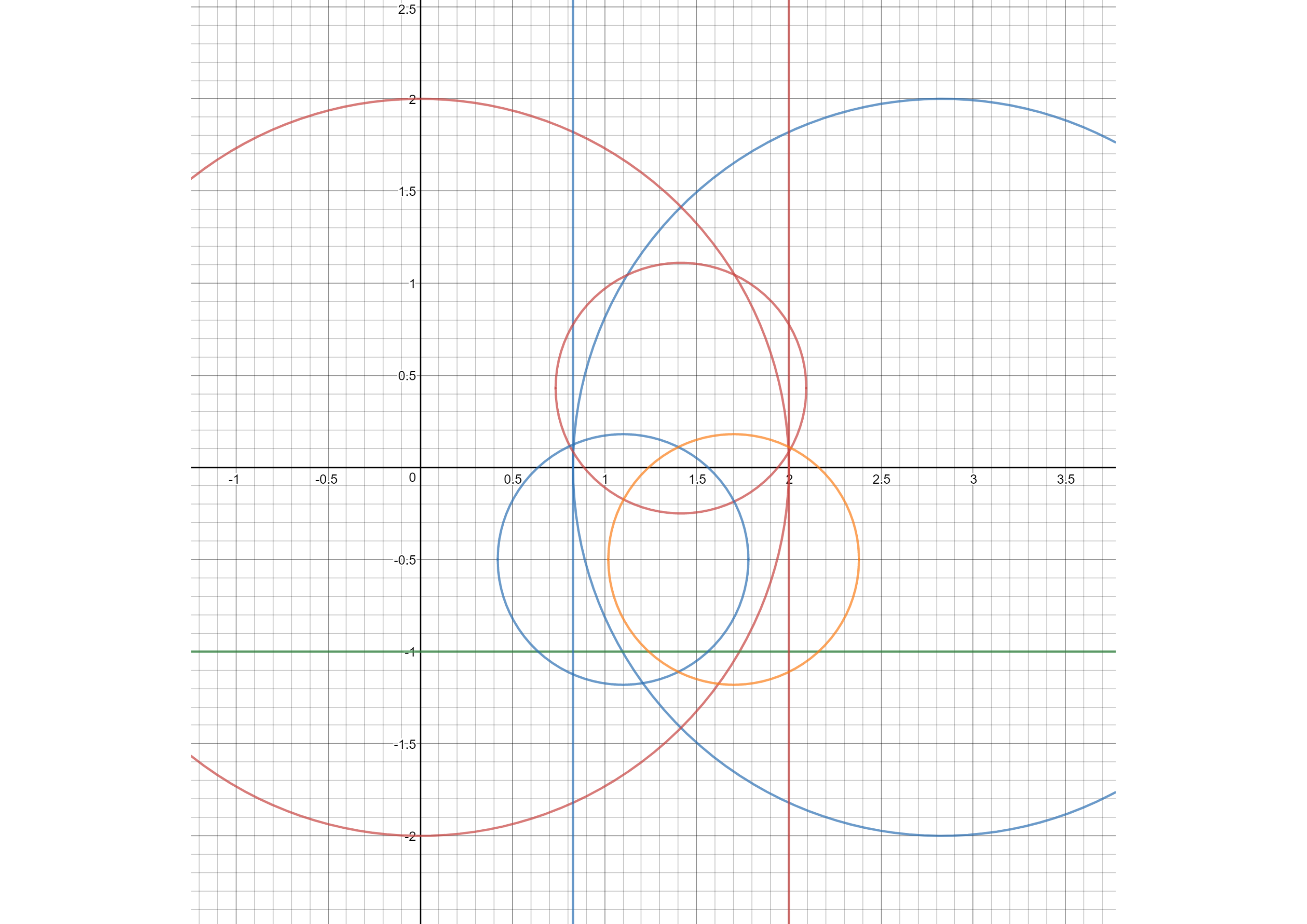}
\caption{$L(d) \cap H(1,-1)$ is contained in the union of three disks of radius $0.68$ in Case 3.2.2.}
\end{figure}
\medskip

\subsection{Proof of the third assertions in Theorems \ref{main1} and \ref{main2}}
Suppose first that $\F$ satisfies the $(2,2)$ property. Then $d \le 2$, and by Lemmas  \ref{lemma2} and \ref{lemma6} it suffices to show that $\big(L(d)\cap H(1,-1)\big)\cup R'$ is contained in the union of  $(\lceil \frac{\sqrt{2}}{r} \rceil)^2$ disks of radius $r$. 

Note that $\big((L(d))\cap H(1,-1)\big)\cup R'$ is contained in the rectangle 
$\{(x,y)\mid \sqrt{8}-2 \le x \le \sqrt{8}, -1\le y \le 1\}$, and this rectangle can be covered by $(\lceil \frac{\sqrt{2}}{r}\rceil)^2$ squares of edge length $\sqrt{2}r$. To conclude the proof of Theorem \ref{main1}, observe that the union of $k$ squares with edge length $\sqrt{2}r$ is contained in the union of $k$ disks of radius $r$. 

If $\F$ is a family of convex $r$-fat sets satisfying the $(4,3)$ property, then we apply Assertion (3) in Theorem \ref{main1} together with Observation \ref{observation}, to conclude $\tau(\F) \le (\lceil \frac{\sqrt{2}}{r}\rceil)^2+1$. Thus Theorem \ref{main2} is proved.

\section*{Acknowledgment}
This research was done as part of an REU project at the University of Michigan, Ann Arbor, during the Summer Semester, 2017.   
We are grateful to the Department of Mathematics at the University of Michigan for supporting this project. The work on this paper  was completed while the second author was in residence at the Mathematical Sciences Research Institute in Berkeley, California, during the Fall 2017 semester, and the material is based upon work supported by the National Science Foundation under Grant No. DMS-1440140.

\end{document}